\definecolor{mygreen}{RGB}{28,172,0} 
\definecolor{myviloet}{RGB}{170,55,241}
\newtheorem{theorem}{Theorem}
\newtheorem{definition}{Definition}
\newtheorem{proposition}{Proposition}
\newtheorem{remark}{Remark}
\DeclareMathOperator{\sech}{sech}
\begin{document}

\title{Modulating Functions-Based Method for Parameters and Source Estimation in One-Dimensional Partial Differential Equations}
\author{Sharefa Asiri and Taous-Meriem Laleg-Kirati \\
{\small Computer, Electrical and mathematical Sciences and Engineering, KAUST, Thuwal, Saudi Arabia}\\
{\small sharefa.asiri@kaust.edu.sa,  taousmeriem.laleg@kaust.edu.sa}
}
\date{}
\maketitle

\begin{abstract}
In this paper, modulating functions-based method is proposed for estimating space-time dependent unknowns in one-dimensional partial differential equations. The proposed method simplified the problem into a system of algebraic equations linear in unknown parameters.  The well-posedness of  modulating functions-based solution is proven. The wave and  the fifth order KdV equations are used as examples  to show the effectiveness of the proposed method in both noise-free and noisy cases. 
\end{abstract}

\section{Introduction} \label{intro}
Inverse coefficients and  inverse source problems for partial differential equations (PDEs) are important topics in many applications such as medical imaging, seismic imaging, oil exploration, and computer tomography \cite{robinson1967predictive,CaFoSe:07,fear2000microwave,kirsch1998characterization}. 
 Various methods have been proposed to solve these problems.  The typical procedure consists in minimizing an appropriate cost function which compares measured data with the corresponding computed one. However due to ill-posedness issues, these methods often require regularization techniques such as the well-known  Tikhonov regularization \cite{MuRaCa:00}, the quasi-reversibility method  \cite{ClKl:07} and the energy regularization approach \cite{HaLiTa:11}.  The performance of the regularization techniques depends on some regularization parameters; in addition,  they  are usually  heavy computationally, especially in case of large number of parameters.  Stochastic inversion techniques such as Bayesian-based approaches are also used to estimate the parameters  of a PDE \cite{PoOlRoMa:13}.  These techniques require the knowledge of  a prior distribution of the unknown which is not always obvious. 
\par Another example of methods which have been proposed to solve inverse coefficients or inverse source problems includes some recursive approaches based on observers \cite{MoChTa:08,RaTuWe:09,AsKiZa:13}. These methods have been initially designed for state estimation of finite dimensional dynamic systems and  have been recently extended  to infinite dimensional systems. However,  they often suffer from some numerical issues generating the loss of observability, a necessary condition for the design of observer, when discretizing the PDE \cite{Zu:05}. 
 \par Responding to the growing interest in developing efficient and robust  algorithms for parameter estimation of PDEs, we propose in this paper  a method based on   the so-called modulating functions.  Modulating functions-based method has been introduced in the early fifties \cite{Sh:54, Sh:57} and has been used in parameters identification for ordinary differential equations (ODEs). In 1966, Perdreauville and Goodson \cite{PeGo:66} extended the method to the identification of  constant and space varying parameters in PDEs  using distributed measurements on a continuous-time. After that,  Fairman and Shen \cite{FaSh:70} modified the approach of Perdreauville and Goodson by using finite difference scheme to approximate the spatial derivatives. In 1997, Ungarala and B. Co  \cite{CoUn:97}  adapted the method for real-time parameters identification for ODE. Recently in 2015, the method has been combined with an optimization method to estimate fractional derivatives in fractional partial differential equations \cite{DoLiLa:15}.
Several types of  modulating functions have been proposed and used, including  sinusoidal functions \cite{Sh:57,PeGo:66}, Hermit functions \cite{Ta:68}, spline-type functions \cite{PrRi:93}, Poisson moment functionals ~\cite{SaRaRa:82}, and Hartley modulating functions \cite{PaUn:95}.
\par Modulating functions-based method has several advantages. It is computationally less costly and robust against noise. In addition, it requires neither initial nor boundary conditions. It also does not require solving the direct problem. Further, approximating the derivatives of the measurements, which are usually noisy, is avoided with this method.
\par In this paper we study the well-posedness of the modulating functions-based solution, and we investigate the effect of the number of modulating functions which, as we will show in this paper, plays a significant role. To the best of our knowledge, only in Pearson and Lee \cite{PeLe:85} a study on the number of modulating functions was considered.  They provided a guideline on how to choose this number. However, this guideline is applicable only for sinusoidal modulating functions. Moreover, it requires a priori knowledge about the system bandwidth which is mostly unknown in identification problems.
\par  The main contributions of this paper are the following. First, we extend the modulating functions-based method to estimate  space-time dependent parameters and source, separately and simultaneously, using finite number  of measurements  in both noisy and non-noisy cases.  Secondly,  the well-posedness of modulating functions-based solution is proved.  Then, a mathematical analysis of the estimation error is performed. Finally, the influence of the number of modulating functions  is investigated and discussed  independently on the choice of the modulating functions type.
\par The paper is organized  as follows. In Section \ref{Sec_main}, modulating functions-based method is presented and  the existence and  uniqueness of the modulating functions-based solution is proved. Section \ref{sec_example1} studies the source and velocity estimation in the wave equation (linear PDE) and provides some numerical simulations  in order to illustrate the effectiveness and the robustness of the proposed method. Error analysis of the noise error contribution  is also discussed.  Parameter estimation for the 5th order KdV Equation (nonlinear PDE) is studied in Section \ref{sec_example2} where some numerical simulations are depicted.  Discussion and concluding remarks are presented in Section \ref{sec_disc} and \ref{sec_conc}, respectively.

\section{Modulating Functions-Based Method}\label{Sec_main}
In this section, the problem is stated; then, the definition of modulating functions is introduced along with the  procedure of applying modulating functions-based method for estimation objectives.
\subsection{Problem Statement}\label{sec_state}   
Consider the following one-dimensional partial differential equation (1D-PDE) of order $n$ defined in the space-time domain $\Omega:=  (0,L) \times (0,T]$:
\begin{equation}\label{GENERAL1DPDE}
\mathbb{T} u(x,t) + \mathbb{P} u(x,t) =f(x,t), \quad \quad   (x,t) \in \Omega,
\end{equation}
with
\begin{equation}
\left\{
\begin{array}{ll}
Bu(x,t)=g(x,t),  \quad \quad x\in \{0,L\},\, t \in (0,T],\\
Eu(x,0)=r(x),  \quad \quad \quad x \in (0,L),
\end{array}
\right.
\end{equation}
where $x$ is the space variable, $t$ is the time variable, $L$ is the end point,  and $T$ is the final time. $B$, $E$ are  boundary and  initial conditions operators, respectively. $\mathbb{T}$ and $\mathbb{P}$ are  temporal and spatial partial differential operators such that
\begin{equation*}
\left.
\begin{array}{l}
\mathbb{T}: C^2(0,T;C^{\bar{n}}(0,L)) \rightarrow C(0,T;C^{\bar{n}}(0,L)), \qquad    \mathbb{T}u(x,t) = \sum_{r=0}^{2} a_r \partial_t^r u(x,t);\\
\mathbb{P}:C^2(0,T;C^{\bar{n}}(0,L)) \rightarrow C^2(0,T;C(0,L)), \qquad  \mathbb{P}u(x,t) = \sum_{s=1}^{\bar{n}} b_s(x,t)  \partial_x^s u(x,t); 
 \end{array}
 \right.
\end{equation*}
where  $1 \le \bar{n}\in \mathbb{N}^* \le n$, $a_r=0 \mbox{ or } 1$,  $b_s(x,t), s=1,\cdots,\bar{n}$, are the coefficients, and  $f(x,t)$ is the source term. Both the coefficients and the source are assumed to be sufficiently smooth. The above regularity requirements insure the existence of all the derivatives, in the classical sense. Depending on the considered PDE, additional conditions are required in order to ensure the existence and the uniqueness of the solution. In addition, these requirements can be slightly relaxed when the equation is formulated in the weak sense.
\par At the two end points, $0$ and $L$, the function $u(.,t)$ and its spatial derivatives up to $\bar{n}-1$ terms are supposed bounded.
\par The following problems are studied in this paper:\\
{\bf IP1: }  Estimation of the source $f(x,t)$;\\
{\bf IP2: } Estimation of the coefficients  $b_s(x,t)$;\\
{\bf IP3: } Joint estimation of the source $f(x,t)$ and the coefficients $b_s(x,t)$;\\
using  measurements of $u(x,t^*)$ and  $\mathbb{T} u(x,t^*)$ at some fixed time $t^*$. Boundary and initial conditions are not necessary to be known.
\subsection{Procedure}\label{sec_method}   
\begin{definition}
A function $\phi(x)\neq0$ is called a modulating function of order $l$ ($l \in \mathbb{N^*}$) if it  satisfies:
\begin{equation}\label{prop_modu}
\left\{
\begin{array}{ll}
\phi(x) \in C^{l} ([0,L])& (a)\\
\mbox{and}&\\
 \phi^{(p)}(0)=\phi^{(p)}(L)=0 , \quad \forall p=1,2,\cdots,l-1,& (b)
 \end{array}
 \right.
\end{equation}
where $L >0$ and $p$ refers to the order of the derivative.
\end{definition}
The basic steps for solving the three inverse problems: IP1, IP2, and IP3, where $u(x,t^*) $ and $\mathbb{T} u(x,t^*)$ are the measured data, are presented in the following:\\
{\bf STEP 1:} Fix the time in equation  (\ref{GENERAL1DPDE}) at $t^*$, and then multiply the equation by the modulating function $\phi(x)$:
\begin{equation}\label{STEP1}
 \mathbb{T} u(x,t^*) \phi(x) +  \mathbb{P} u(x,t^*)  \phi(x) =  f(x,t^*) \phi(x).
\end{equation}
{\bf STEP 2:} Integrate over the space interval:
\begin{equation}\label{STEP2}
\int_0^L \mathbb{T} u(x,t^*) \phi(x)\, \mathrm{d}x+ \int_0^L \mathbb{P} u(x,t^*)  \phi(x) \, \mathrm{d}x= \int_0^L f(x,t^*) \phi(x) \, \mathrm{d}x.
\end{equation}
It is worth noting that in case of noisy measurements, which is usually the case in practice, the integral in this step has an effect to dampen and filter  the noise.\\
{\bf STEP 3:}   Apply the integration by parts formula to the second integral in (\ref{STEP2}):
\begin{equation}\label{STEP3}
\int_0^L \mathbb{T} u(x,t^*) \phi(x) \, \mathrm{d}x+  \int_0^L  u(x,t^*)  \mathbb{Q} \phi(x) \ \mathrm{d}x=
\int_0^L f(x,t^*) \phi(x) \ \mathrm{d}x.
\end{equation}
where $\mathbb{Q} \phi(x) =  \sum_{s=1}^{\bar{n}}  (-1)^{s} \partial_x^s \left[ b_s(x,t^*) \phi(x) \right]$. This step transfers all the spatial derivatives of the solution to derivatives of the modulating function, which is usually known analytically  (\ref{prop_modu}.a). Also, the boundary conditions, that appear in the integration by parts process,  are eliminated thanks to the second property of modulating functions (\ref{prop_modu}.b).
\par By solving (\ref{STEP3}), one can obtain the unknown coefficients, source, or both. It is worth noting that modulating functions-based method does not require solving a direct problem that may be computationally very complex and especially for high order equations as it is required with standard optimization methods. 
\par  As the unknown is identified at fixed time $t^*$, one can also identify it  at some other fixed times; then interpolate the obtained data to find the unknown in the whole domain $\Omega$.
\subsection{Properties of Modulating Functions-Based Solution}\label{exist_unique}
Now we can discuss the well-posedness of the modulating functions-based solution. First, since all the functions in (\ref{STEP3}) are sufficiently smooth, and the product of two smooth functions has finite integral,
all the integrals in (\ref{STEP3}) converge for each $t^* \in (0,T]$. Hence, modulating functions-based solution of each inverse problem exists. The uniqueness and stability of the modulating functions-based solution for the three inverse problems are guaranteed by the next theorems.
\begin{theorem}[Uniqueness of modulating functions-based solution]
Assume that the measurements $u(x,t^*)$ and $ \mathbb{T} u(x,t^*)$ exist and are sufficiently smooth. Then there exist a unique solution for {\bf IP1}, {\bf IP2}, and {\bf IP3}  satisfying (\ref{STEP3}).
\end{theorem}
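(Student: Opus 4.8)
The plan is to reduce (\ref{STEP3}) to a finite linear algebraic system and then read off existence and uniqueness from the invertibility of its matrix. First I would make the unknowns finite dimensional by fixing a linearly independent family of basis functions and writing, in {\bf IP1}, $f(x,t^*)=\sum_{j=1}^{M} c_j\,\psi_j(x)$, in {\bf IP2}, $b_s(x,t^*)=\sum_{j=1}^{M} c_j^{(s)}\,\psi_j(x)$ for $s=1,\dots,\bar n$, and in {\bf IP3} both expansions simultaneously. Choosing a matching number $N$ of modulating functions $\phi_1,\dots,\phi_N$ and substituting into (\ref{STEP3}) for each $\phi_m$ turns (\ref{STEP3}) into a square system $A\mathbf c=\mathbf d$, whose right-hand side is built only from the measured data $u(\cdot,t^*)$, $\mathbb T u(\cdot,t^*)$ (and the known source in {\bf IP2}), and whose matrix $A$ has entries that are integrals over $(0,L)$ of products of the $\psi_j$, the $\phi_m$, and --- in {\bf IP2}/{\bf IP3} --- the derivatives $\partial_x^s u(\cdot,t^*)$ obtained by undoing the integration by parts hidden in $\mathbb Q\phi_m$. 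The modulating functions-based solution then exists and is unique if and only if $A$ is nonsingular, so the theorem reduces to checking this in each case.

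For {\bf IP1} the check is immediate: taking $\psi_j=\phi_j$ makes $A$ the Gram matrix $\bigl[\int_0^L \phi_j(x)\phi_m(x)\,\mathrm dx\bigr]_{m,j}$ of the modulating functions in $L^2(0,L)$. Since a modulating function is by definition not identically zero and the family can always be chosen linearly independent, this Gram matrix is symmetric positive definite, hence invertible, which gives existence and uniqueness for {\bf IP1}. I would also record the equivalent basis-free statement: if $\int_0^L h(x)\phi(x)\,\mathrm dx=0$ for every $\phi$ in a complete modulating family, then $h\equiv 0$ --- the $L^2$ version of the same fact, useful when one prefers not to fix a basis.

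For {\bf IP2} (and the coupled problem {\bf IP3}) the matrix entries are of the form $\int_0^L \bigl(\partial_x^s u(x,t^*)\bigr)\,\psi_j(x)\,\phi_m(x)\,\mathrm dx$ summed over $s$, so nonsingularity is no longer automatic and this is where the real work lies. Here I would argue that $A\mathbf c=0$ forces the function $x\mapsto \sum_{s=1}^{\bar n}\bigl(\sum_j c_j^{(s)}\psi_j(x)\bigr)\partial_x^s u(x,t^*)$ to be orthogonal to all the $\phi_m$; invoking either completeness of the modulating family or a dimension count with sufficiently many modulating functions, this combination must vanish identically, and then linear independence of the $\psi_j$ together with a non-degeneracy assumption on the data --- essentially that the measured profile $u(\cdot,t^*)$ and its spatial derivatives do not satisfy a spurious linear relation with the chosen basis (in the scalar case $\bar n=1$ this is simply $\partial_x u(\cdot,t^*)\not\equiv 0$ on the relevant set) --- yields $\mathbf c=0$. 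For {\bf IP3} the matrix is block structured, with the {\bf IP1} Gram block coupled to the {\bf IP2} blocks, and its nonsingularity follows from nonsingularity of the blocks plus the same data non-degeneracy. The main obstacle, then, is {\bf IP2}/{\bf IP3}: identifying the minimal hypothesis on $u(x,t^*)$ (or on the richness of the modulating-function family) under which this matrix is guaranteed nonsingular, and verifying that the smoothness assumptions already in force suffice, is the delicate step; the rest is bookkeeping.
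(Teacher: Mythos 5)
Your overall strategy is genuinely different from the paper's. The paper does not reduce (\ref{STEP3}) to a finite matrix at all in this theorem: it runs a classical two-solutions argument, assuming $f,\bar f$ (resp.\ $b_1,\bar b_1$) both satisfy (\ref{STEP3}) for \emph{every} modulating function $\phi$, subtracting, and concluding from $\int_0^L\bigl[f-\bar f\bigr]\phi\,\mathrm dx=0$ for all admissible $\phi$ that $f=\bar f$, and for IP2 from $\int_0^L\bigl[(b_1-\bar b_1)u\,\phi'+(b_1'-\bar b_1')u\,\phi\bigr]\mathrm dx=0$ that $b_1=\bar b_1$ (using $u(x,t^*)\neq 0$). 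The finite-dimensional system you build is the content of the paper's Proposition~\ref{prop_main} and Remark~\ref{rem_1}, not of this theorem; in particular, uniqueness of the basis coefficients $\mathbf c$ in a prescribed span is a weaker statement than uniqueness of the function $f$ or $b_s$ satisfying (\ref{STEP3}), so your reduction changes what is being proved unless you also invoke the completeness/density argument you mention only in passing for IP1.

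The more serious problem is that for IP2 and IP3 you do not actually close the argument: you correctly identify that nonsingularity of $A$ (equivalently, that $\sum_s(\sum_j c_j^{(s)}\psi_j)\partial_x^s u(\cdot,t^*)\perp\phi_m$ for all $m$ forces $\mathbf c=0$) is the crux, but you then defer it to an unspecified ``non-degeneracy assumption on the data'' and declare the rest bookkeeping. That is precisely the step the theorem asks you to prove, so as written the proposal establishes IP1 but leaves IP2 and IP3 open. To be fair, the paper's own resolution of this step is also thin --- it passes from orthogonality of $(b_1-\bar b_1)u\phi'+(b_1'-\bar b_1')u\phi$ to the vanishing of each bracket separately, treating $\phi$ and $\phi'$ as if they were independent test functions, and it too needs the hypothesis $u(x,t^*)\neq 0$ --- but it does commit to an argument, whereas yours stops at naming the obstacle. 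If you want to keep your linear-algebra route, you must either prove the Gram-type matrix in IP2/IP3 is nonsingular under the stated smoothness hypotheses (it is not in general: if $\partial_x u(\cdot,t^*)\equiv 0$ the IP2 block is identically zero), or state the required non-degeneracy of $u(\cdot,t^*)$ as an explicit hypothesis and acknowledge that the theorem as stated needs it.
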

\begin{proof}
{\bf IP1:} Assume that $f$,$\bar{f}$ $\in C([0,L])$ satisfy
\begin{equation}\label{source_uniq}
\int_0^L \mathbb{T} u(x,t^*) \phi(x) \, \mathrm{d}x+  \int_0^L  u(x,t^*)  \mathbb{Q} \phi(x) \ \mathrm{d}x=
\int_0^L f(x,t^*) \phi(x) \ \mathrm{d}x = \int_0^L \bar{f}(x,t^*) \phi(x) \ \mathrm{d}x
\end{equation}
for all modulating function $\phi(x)$. Then
\begin{equation}
\int_0^L \left[ f(x,t^*) - \bar{f}(x,t^*) \right] \phi(x) \ \mathrm{d}x = 0
\end{equation}
for all  $\phi(x)$, where $\phi(x)$ satisfies (\ref{prop_modu}); hence $f-\bar{f}=0$\\
{\bf IP2:}  For simplicity and without lose of generality, let $\bar{n}=1$. In the following, we prove here the uniqueness of estimating $b_1(x,t^*)$.  As in {\bf IP1}, assume that $b_1(x,t^*)$, $\bar{b}_1(x,t^*)$ satisfy
\begin{equation}\label{coeff_uniq}
\begin{split}
\int_0^L \mathbb{T} u(x,t^*) \phi(x) \, \mathrm{d}x- \int_0^L f(x,t^*) \phi(x) \ \mathrm{d}x \\
 =  \int_0^L   \left[ b_1^\prime(x,t^*) \phi(x) + b_1(x,t^*) \phi^\prime(x) \right] u(x,t^*)\ \mathrm{d}x\\
 =   \int_0^L   \left[ \bar{b}_1^\prime(x,t^*) \phi(x) + \bar{b}_1(x,t^*) \phi^\prime(x) \right] u(x,t^*) \ \mathrm{d}x
\end{split}
\end{equation}
for any modulating function $\phi(x)$. Hence, 
\begin{equation}
 \int_0^L   \left[b_1(x,t^*) - \bar{b}_1(x,t^*) \right] u(x,t^*) \phi^\prime(x) \ \mathrm{d}x +   \int_0^L  \left[b_1^\prime(x,t^*) - {\bar{b}}^\prime_1(x,t^*) \right] u(x,t^*) \phi(x)  \ \mathrm{d}x=0,
\end{equation}
for all $\phi(x)$ and $\phi^\prime(x)$. Thus, 
\begin{equation}
\left[b_1(x,t^*) - \bar{b}_1(x,t^*) \right] u(x,t^*)=0;
\end{equation}
and
\begin{equation}
\left[b_1^\prime(x,t^*) - {\bar{b}}^\prime_1(x,t^*) \right] u(x,t^*)=0.
\end{equation}
Since $u(x,t^*)$ is the measurement, $u(x,t^*) \neq 0$; therefore, $b_1=~\bar{b}_1$.\\
{\bf IP3:} Here we prove the uniqueness of estimating $f(x,t^*)$ and $b_1(x,t^*)$ jointly.
 Let  $f$,$\bar{f}$ and $b_1$, $\bar{b}_1$  satisfy 
\begin{equation}\label{joint_uniq}
\begin{split}
 \int_0^L \mathbb{T} u(x,t^*) \phi(x) \, \mathrm{d}x \qquad \qquad \qquad \qquad\qquad\qquad\qquad\qquad \qquad\qquad \\
= \int_0^L f(x,t^*) \phi(x) \ \mathrm{d}x +   \int_0^L  \left[ b_1^\prime(x,t^*) \phi(x) + b_1(x,t^*) \phi^\prime(x) \right]  u(x,t^*) \ \mathrm{d}x  \\
=\int_0^L \bar{f}(x,t^*) \phi(x) \ \mathrm{d}x+ \int_0^L  \left[ \bar{b}_1^\prime(x,t^*) \phi(x) + \bar{b}_1(x,t^*) \phi^\prime(x) \right] u(x,t^*) \ \mathrm{d}x
\end{split}
\end{equation}
for all modulating function $\phi(x)$. Then
\begin{equation}\label{joint_uniq_0}
\begin{split}
\int_0^L \left[ f(x,t^*) - \bar{f}(x,t^*) \right] \phi(x) \ \mathrm{d}x  \qquad\qquad\qquad \qquad\qquad\qquad \qquad\qquad\qquad\\
+ \int_0^L   \left\{  \left[b_1(x,t^*) - \bar{b}_1(x,t^*) \right] \phi^\prime(x) + \left[b_1^\prime(x,t^*) - {\bar{b}}^\prime_1(x,t^*) \right] \phi(x) \right\} u(x,t^*) \ \mathrm{d}x=0
\end{split}
\end{equation}
for all $\phi(x)$ and at $u(x,t^*) \neq 0$. Therefore, (\ref{joint_uniq_0}) is true if and only if 
\[f(x,t^*) - \bar{f}(x,t^*)=0\] and 
\[\left[b_1(x,t^*) - \bar{b}_1(x,t^*) \right] \phi^\prime(x) + \left[b_1^\prime(x,t^*) - {\bar{b}}^\prime_1(x,t^*) \right] \phi(x) = 0\]
for all $\phi(x)$. Consequently,  $f(x,t^*) = \bar{f}(x,t^*)$ and $b_1(x,t^*) = \bar{b}_1(x,t^*)$.
\end{proof}
\begin{theorem}[Stability of modulating functions-based solution]
Modulating functions-based solution of {\bf IP1}, {\bf IP2}, and {\bf IP3} is stable in the sense that: {\bf IP1}: if $f$ and $\tilde{f}$ are the solutions of (\ref{STEP3}) with respect to the data $\{u,\mathbb{T}u\}$ and $\{\tilde{u},\widetilde{\mathbb{T}u}\}$, respectively; such that
 \begin{equation}\label{stability_noisy_data}
\left.
\begin{array}{cccc}
\tilde{u}=u+\eta_1, &  &  &  \widetilde{\mathbb{T}u}=\mathbb{T}u + \eta_2;
\end{array}
\right.
 \end{equation}
where $\eta_1, \eta_2$ are the noise functions such that $\eta_1 \in H^{\bar{n}}[0,L]$ and $\eta_2 \in L^{2}[0,L]$. Then  $\|f-\tilde{f}\|_{L^2} \longrightarrow 0$ as $\|\eta_1\|_{H^{\bar{n}}},\|\eta_2\|_{L^2}\longrightarrow 0$. Similarly for {\bf IP2} and {\bf IP3}.
\end{theorem}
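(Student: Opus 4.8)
The plan is to turn, for each inverse problem, the difference of the two weak identities (\ref{STEP3}) into an equation of the form ``$\int_0^L(\text{error})\,\phi\,\mathrm{d}x=\Lambda(\eta_1,\eta_2)[\phi]$ for every modulating function $\phi$'', where $\Lambda$ is linear in the noise, and then to invoke the same completeness of the modulating functions that already underlies the uniqueness theorem, namely that they are dense in $L^2(0,L)$ (for instance $\{[x(L-x)]^{l}q(x):q\text{ a polynomial}\}$ are modulating functions of order $l$, and a function orthogonal to all of them vanishes a.e.).

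\emph{IP1.} Writing (\ref{STEP3}) for the exact data $\{u,\mathbb{T}u\}$ with unknown $f$ and for the perturbed data $\{\tilde u,\widetilde{\mathbb{T}u}\}$ with unknown $\tilde f$, using the \emph{same} test function $\phi$, and subtracting with (\ref{stability_noisy_data}) inserted, gives, for every modulating function $\phi$,
\[
\int_0^L (\tilde f - f)(x,t^*)\,\phi(x)\,\mathrm{d}x = \int_0^L \eta_2(x)\,\phi(x)\,\mathrm{d}x + \int_0^L \eta_1(x)\,\mathbb{Q}\phi(x)\,\mathrm{d}x .
\]
Since $\eta_1\in H^{\bar n}(0,L)$, I would then integrate the last term by parts $s$ times in each summand of $\mathbb{Q}\phi=\sum_{s=1}^{\bar n}(-1)^s\partial_x^s[b_s\phi]$, exactly reversing STEP 3; as in STEP 3 the boundary contributions drop out by the modulating-function properties (and the standing boundary assumptions of Section~\ref{sec_state}), leaving $\int_0^L \eta_1\,\mathbb{Q}\phi\,\mathrm{d}x=\int_0^L (\mathbb{P}\eta_1)(x,t^*)\,\phi(x)\,\mathrm{d}x$, with $\mathbb{P}\eta_1=\sum_{s=1}^{\bar n}b_s(\cdot,t^*)\,\partial_x^s\eta_1\in L^2(0,L)$ by smoothness of the coefficients. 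Density of the modulating functions then forces $\tilde f-f=\eta_2+\mathbb{P}\eta_1$ a.e.\ on $(0,L)$, and the triangle inequality together with $\|\mathbb{P}\eta_1\|_{L^2}\le\big(\sum_{s=1}^{\bar n}\|b_s(\cdot,t^*)\|_{L^\infty}\big)\|\eta_1\|_{H^{\bar n}}=:C\|\eta_1\|_{H^{\bar n}}$ gives $\|f-\tilde f\|_{L^2}\le\|\eta_2\|_{L^2}+C\|\eta_1\|_{H^{\bar n}}$, which tends to $0$ as $\|\eta_1\|_{H^{\bar n}},\|\eta_2\|_{L^2}\to0$.

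\emph{IP2 and IP3.} For IP2 (take $\bar n=1$, $f$ known) the same subtraction, after expanding $(b_1\phi)'=b_1'\phi+b_1\phi'$ and one further integration by parts shifting the derivative off $(\tilde b_1-b_1)\phi$ onto $u$, yields, for all $\phi$, $\int_0^L\big[(\tilde b_1-b_1)\,\partial_x u+\eta_2+\tilde b_1\,\partial_x\eta_1\big]\phi\,\mathrm{d}x=0$; density gives $(\tilde b_1-b_1)\,\partial_x u(\cdot,t^*)=-(\eta_2+\tilde b_1\,\partial_x\eta_1)$ a.e.\ (the same conclusion follows from the grouping used in the uniqueness proof, which divides instead by $u$). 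Dividing by the nonvanishing measurement and using an a priori bound $\|\tilde b_1\|_{L^\infty}\le M$ to absorb the $\tilde b_1$ factor gives $\|b_1-\tilde b_1\|_{L^2}\le C'(\|\eta_2\|_{L^2}+\|\eta_1\|_{H^1})\to0$. IP3 is the superposition: its error identity tested against $\phi$ is the sum of the IP1 and IP2 left-hand sides, and, exactly as in the uniqueness proof of IP3, one splits it into the source part ($\tilde f-f=\eta_2+\mathbb{P}\eta_1$) and the coefficient part, concluding as above.

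\emph{Main obstacle.} The delicate point is not IP1, which is essentially a one-line duality argument once the integration by parts is justified, but IP2 and IP3: there the unknown coefficient enters multiplied by the noisy state, so after using density one must still divide by $u(\cdot,t^*)$ (or $\partial_x u(\cdot,t^*)$), which is licit only where the measurement stays bounded away from zero, and the resulting bound involves the perturbed unknown $\tilde b_1$ itself, so closing the estimate requires either an a priori uniform bound on $\tilde b_1$ or a small-noise bootstrap. A secondary technical point, shared with STEP 3, is to make precise which boundary hypotheses on $u$ and on $\eta_1$ (and their spatial derivatives at $x=0,L$) are needed for every boundary term in the integration by parts to vanish given only $\phi^{(p)}(0)=\phi^{(p)}(L)=0$ for $p=1,\dots,l-1$.
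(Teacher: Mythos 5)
Your proof is correct in substance but reaches the key estimate by a genuinely different route from the paper. For IP1 the paper does not use density of the modulating functions at all: after deriving the modulated inequality (\ref{stability_ineq1_IP1}) via H\"older, it steps outside the weak formulation and returns to the strong form of the PDE (\ref{GENERAL1DPDE}), writing $f_1=\mathbb{T}u_1+\mathbb{P}u_1$ and $f_2=\mathbb{T}u_2+\mathbb{P}u_2$ and subtracting to obtain $\|\tilde f-f\|_{L^2}\le\|\eta_2\|_{L^2}+\|b_1\|_{H^1}\|\eta_1\|_{H^1}$ directly (inequality (\ref{19})), which already contains the conclusion; the subsequent ``combination'' into (\ref{stability_ineq3_IP1}) adds nothing beyond (\ref{19}). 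You instead stay entirely inside the modulated identities (\ref{STEP3}): you subtract the two weak identities, undo the integration by parts on the noise term, and invoke density of the modulating functions in $L^2(0,L)$ to upgrade the weak identity to the pointwise relation $\tilde f-f=\eta_2+\mathbb{P}\eta_1$, from which the same bound follows. Your version is more self-contained --- it proves stability of the solution \emph{as defined by} (\ref{STEP3}) rather than of the solution of the strong PDE --- it works for general $\bar n$ where the paper restricts to $\bar n=1$, and it dispenses with the paper's unnecessary side conditions that $\|\phi'\|_{L^2}/\|\phi\|_{L^2}$ and $\|b_1\|_{H^1}$ be ``small enough'' (neither is needed for the limit statement). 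The price is that you must justify the density claim and the vanishing of the boundary terms when integrating by parts against $\eta_1$; your family $[x(L-x)]^{l}q(x)$ handles the former, and the latter is exactly the issue the paper itself leaves implicit in STEP 3 (note the definition only forces $\phi^{(p)}$ to vanish for $p\ge 1$). For IP2 and IP3 the paper offers only ``similarly''; your analysis goes further and correctly isolates the real obstruction there --- the coefficient error appears multiplied by $\partial_x u(\cdot,t^*)$, so closing the estimate needs the measurement bounded away from zero and an a priori bound on $\tilde b_1$ --- which is a gap in the theorem as stated rather than in your argument.
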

\begin{proof}
 For simplicity and without lose of generality, let $\bar{n}=1$.
 \par \noindent {\bf IP1}: Since $f$ and $\tilde{f}$ are solutions to (\ref{STEP3}) w.r.t (\ref{stability_noisy_data}), one can write
 \begin{equation}
 \left.
\begin{array}{ll}
\left| \displaystyle{\int_0^L} \left[\tilde{f}-f \right] \phi  \ \mathrm{d}x \right| &= \left|  \displaystyle{\int_0^L}  \eta_2 \phi  \ \mathrm{d}x +  \displaystyle{\int_0^L}  \eta_1 \left( - b_1 \phi \right)^\prime  \ \mathrm{d}x  \right|, \\
 & \le \left|   \displaystyle{\int_0^L} \eta_2 \phi  \ \mathrm{d}x  \right|+ \left|  \displaystyle{\int_0^L}  \eta_1 \left( - b_1 \phi \right)^\prime  \ \mathrm{d}x  \right|. \label{stab_after_tring}
 \end{array}
\right.
\end{equation}
By applying Holder inequality to the right hand side of (\ref{stab_after_tring}), we end up with
 \begin{equation}
\left.
\begin{array}{ll}
\left| \displaystyle{\int_0^L} \left[\tilde{f}-f \right] \phi  \ \mathrm{d}x \right| & \le \left\| \eta_2 \right\|_{L^2} \left\| \phi \right\|_{L^2} + \left\| \eta_1 \right\|_{L^2}  \left\| b_1^\prime \phi+ b_1 \phi^\prime  \right\|_{L^2},\\
 & \le \displaystyle{\left\| \eta_2 \right\|_{L^2} \left\| \phi \right\|_{L^2} + \left\| \eta_1 \right\|_{L^2}  \left\{  \left\| b_1^\prime\right\|_{L^2} \left\| \phi \right\|_{L^2}+ \left\|b_1\right\|_{L^2} \left\| \phi^\prime  \right\|_{L^2} \right\} },\\
& \le\displaystyle{ \left\| \eta_2 \right\|_{L^2} \left\| \phi \right\|_{L^2} + \left\| \eta_1 \right\|_{H^1} \left\| b_1\right\|_{H^1}  \left\{ \left\| \phi \right\|_{L^2}+  \left\| \phi^\prime  \right\|_{L^2} \right\}; } \label{stability_ineq1_IP1}
 \end{array}
\right.
\end{equation}
where the prime symbol here refers to the space derivatives. On the other hand,
\begin{equation}\label{stability_ineq2_IP1}
\left| \int_0^L \left[\tilde{f}-f \right] \phi  \ \mathrm{d}x \right| \le \left\| \tilde{f}-f \right\|_{L^2} \left\| \phi \right\|_{L^2}.
\end{equation}
Moreover, from the PDE (\ref{GENERAL1DPDE}): if $f_1$ and $f_2$ are two source terms, then
\begin{eqnarray}
f_1= \mathbb{T}{u}_1 + \mathbb{P}u_1, \\
f_2= \mathbb{T}{u}_2 +\mathbb{P} u_2.
\end{eqnarray}
The difference, in a norm,  between the two equations is:
\begin{equation}
\begin{array}{ll}
\|f_1-f_2\|_{L^2} &= \|(\mathbb{T}{u}_1 -\mathbb{T}{u}_2) + (\mathbb{P} u_1-\mathbb{P}u_2) \|_{L^2}\\
& \le \|\mathbb{T}{u}_1 -\mathbb{T}{u}_2\|_{L^2}\ + \| \mathbb{P} (u_1-u_2) \|_{L^2}
\end{array}
\end{equation}
In the case $\bar{n}=1$, $\mathbb{P}u=b_1u^\prime$; and if  $u_1-u_2=\eta_1$, $\mathbb{T}{u}_1 -\mathbb{T}{u}_2=\eta_2$, and $f_1$, $f_2$ represent the estimated and the exact source, then 
\begin{equation}
\begin{array}{ll}
\| \tilde{f}-f \|_{L^2} & \le \|\eta_2\|_{L^2} + \|b_1 \eta_1^\prime\|_{L^2}\\
& \le \|\eta_2\|_{L^2} + \|b_1\|_{H^1} \|\eta_1\|_{H^1} \label{19}
\end{array}
\end{equation}
Thus, by multiplying (\ref{19}) with $\|\phi\|_{L^2}$, we obtain:
\begin{equation}\label{f8}
\| \tilde{f}-f \|_{L^2}  \|\phi\|_{L^2} \le \|\eta_2\|_{L^2} \|\phi\|_{L^2} +  \|b_1\|_{H^1} \|\eta_1\|_{H^1} \|\phi\|_{L^2}.
\end{equation}
Combining (\ref{stability_ineq1_IP1}), (\ref{stability_ineq2_IP1}), and (\ref{f8}) gives us
\begin{equation}\label{stability_ineq3_IP1}
\left\| \tilde{f}-f \right\|_{L^2} \le  \left\| \eta_2 \right\|_{L^2} + \left\| \eta_1 \right\|_{H^1}  \left\| {b_1} \right\|_{H^1} \left[ 1+ \dfrac{\| \phi^\prime \|_{L^2}} {\| \phi\|_{L^2}} \right]. 
\end{equation}
The nice properties of the modulating functions guaranty that the ratio $ \dfrac{\| \phi^\prime \|_{L^2}} {\| \phi\|_{L^2}}$ in (\ref{stability_ineq3_IP1}) is small enough, and  under the assumption that  $\left\| {b_1} \right\|_{H^1}$ is small enough, then as $\|\eta_1\|_{H^1}, \|\eta_2\|_{L^2} \longrightarrow 0$, $ \| \tilde{f}-f \|_{L^2} \longrightarrow 0$; which proves the stability, in term of norm, for modulating functions-based solution of {\bf IP1}. \par Similarly, one can prove the stability for modulating functions-based solutions of {\bf IP2} and  {\bf IP3}.
\end{proof}
 As a main feature of modulating functions-based method, estimating the unknown in equation (\ref{STEP3}) can be simplified into solving a system of linear algebraic equations. The details are presented in the next proposition.
\begin{proposition}\label{prop_main} 
Let $ \sum_{i=1}^I \gamma_i \xi_i(x)$ be a basis expansion of the unknown, $f(x,t)$ or $b_s(x,t)$,  at a fixed time $t^*$, where  $\xi_i(x)$  and $\gamma_i$, for $i=1,\cdots,I$, are basis functions and their coefficients, respectively; and let $\{\phi_m(x)\}_{m=1}^{m=M}$ be a class of modulating functions of order $\bar{n}^*$ ($\bar{n}^* \ge \bar{n}$) with $M \ge I$. Then the unknown coefficients can be estimated by solving:
\begin{equation}\label{STEP3_m}
\int_0^L \mathbb{T} u(x,t^*) \phi_m(x) \, \mathrm{d}x+  \int_0^L  u(x,t^*)  \mathbb{Q} \phi_m(x) \ \mathrm{d}x=
\int_0^L f(x,t^*) \phi_m(x) \ \mathrm{d}x, \quad m=1,\cdots,M,
\end{equation}
which can be written into a system of the form:
\begin{equation}\label{STEP3_LAE}
A {\Gamma} = {Y},
\end{equation}
where the elements of matrix $A$ are combination between the basis and the modulating functions. The elements $y_m$ of the vector ${Y} \in \mathbb{R}^M$ are:\\
{\bf IP1}: $y_m= \displaystyle{\int_0^L} \mathbb{T} u(x,t^*) \phi(x) +  u(x,t^*)  \mathbb{Q} \phi(x) \ \mathrm{d}x$;\\
{\bf IP2}: $y_m=\displaystyle{\int_0^L}  \left[ \mathbb{T} u(x,t^*) - f(x,t^*)\right] \phi(x) \ \mathrm{d}x$; \\
{\bf IP3}: $y_m= \displaystyle{\int_0^L} \mathbb{T} u(x,t^*) \phi(x) \, \mathrm{d}x$.\\
$\Gamma$ is a vector of the unknown basis coefficients $\gamma_i,$ $i=1,2,\cdots,I$. 
\end{proposition}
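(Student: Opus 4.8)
The plan is to substitute the finite basis expansion of the unknown into the weak identity (\ref{STEP3}), test it against each member of the class $\{\phi_m\}_{m=1}^{M}$, and then use linearity of the integral to read off the matrix $A$ and the right-hand side $Y$ in each of the three cases; beyond this and the convergence of the integrals already established before the statement, nothing deep is required.

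For \textbf{IP1} I would set $f(x,t^*)=\sum_{i=1}^{I}\gamma_i\xi_i(x)$ and insert it into (\ref{STEP3_m}). Because $\mathbb{Q}\phi_m=\sum_{s=1}^{\bar n}(-1)^s\partial_x^s[b_s(\cdot,t^*)\phi_m]$ involves only the known coefficients and the analytically available derivatives of $\phi_m$ (property (\ref{prop_modu}.a)), the entire left-hand side is a computable number $y_m:=\int_0^L \mathbb{T}u(x,t^*)\phi_m(x)+u(x,t^*)\mathbb{Q}\phi_m(x)\,\mathrm{d}x$. Pulling the sum out of the integral on the right gives $\sum_{i=1}^{I}\gamma_i\int_0^L\xi_i(x)\phi_m(x)\,\mathrm{d}x=y_m$, that is $(A\Gamma)_m=y_m$ with $A_{mi}=\int_0^L\xi_i(x)\phi_m(x)\,\mathrm{d}x$; letting $m$ run from $1$ to $M$ yields the $M\times I$ system (\ref{STEP3_LAE}).

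For \textbf{IP2}, taking $\bar n=1$ without loss of generality as in the uniqueness proof, one has $\mathbb{Q}\phi_m=-\partial_x[b_1\phi_m]=-(b_1'\phi_m+b_1\phi_m')$, so (\ref{STEP3}) rearranges to $\int_0^L[\mathbb{T}u-f]\phi_m\,\mathrm{d}x=\int_0^L u\,(b_1'\phi_m+b_1\phi_m')\,\mathrm{d}x$. Since $f$ is known in this problem, the left side is the computable scalar $y_m=\int_0^L[\mathbb{T}u(x,t^*)-f(x,t^*)]\phi_m(x)\,\mathrm{d}x$. Substituting $b_1(x,t^*)=\sum_i\gamma_i\xi_i(x)$, hence $b_1'=\sum_i\gamma_i\xi_i'$ (legitimate because the standing smoothness hypotheses let us take $\xi_i\in C^{1}$), and using linearity gives $\sum_i\gamma_i\int_0^L u(x,t^*)\bigl(\xi_i'(x)\phi_m(x)+\xi_i(x)\phi_m'(x)\bigr)\,\mathrm{d}x=y_m$, so $A_{mi}=\int_0^L u(x,t^*)\bigl(\xi_i'\phi_m+\xi_i\phi_m'\bigr)\,\mathrm{d}x$ — a combination of the basis and modulating functions weighted by the measurement, and, importantly, one that involves no derivative of $u$, which is exactly what the integration by parts in STEP 3 buys us. Problem \textbf{IP3} is the superposition of the two: with $f=\sum_i\gamma_i^{f}\xi_i^{f}$ and $b_1=\sum_j\gamma_j^{b}\xi_j^{b}$, moving all known quantities to the left turns (\ref{STEP3}) into $\int_0^L\mathbb{T}u\,\phi_m\,\mathrm{d}x=\sum_i\gamma_i^{f}\int_0^L\xi_i^{f}\phi_m\,\mathrm{d}x+\sum_j\gamma_j^{b}\int_0^L u\,\bigl((\xi_j^{b})'\phi_m+\xi_j^{b}\phi_m'\bigr)\,\mathrm{d}x$, which is (\ref{STEP3_LAE}) with $\Gamma$ the stacked vector $(\gamma^{f};\gamma^{b})$, $A$ the horizontal concatenation of the two blocks found above, and $y_m=\int_0^L\mathbb{T}u(x,t^*)\phi_m(x)\,\mathrm{d}x$.

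Finally I would note that $M\ge I$ makes (\ref{STEP3_LAE}) square or overdetermined, so $\Gamma$ is determined — uniquely when $A$ is injective, and in the least-squares sense when $M>I$ — provided $A$ has full column rank $I$. This rank/solvability point is the only one that needs a genuine argument rather than bookkeeping: the uniqueness theorem proved earlier shows that the continuous map from the unknown to the data is injective when \emph{all} modulating functions are used, so a finite subfamily $\{\phi_m\}$ separating the $I$ basis coefficients does exist, but for a specific basis and a specific modulating-function class one must still verify that the chosen finite truncation does not give a rank-deficient $A$. I expect this to be the main obstacle; everything else is linearity of the integral together with the smoothness and boundary-vanishing properties (\ref{prop_modu}) that were already used to pass from (\ref{STEP2}) to (\ref{STEP3}).
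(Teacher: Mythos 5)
Your proposal is correct and follows the same route as the paper: the paper's own proof is a one-line appeal to "the steps used to derive (\ref{STEP3}) but w.r.t.\ $\phi_m(x)$," and your substitution of the basis expansion plus linearity of the integral is exactly that argument written out, with the matrix entries you derive matching the forms given in Proposition~\ref{IP12} (specialized to $\bar n=1$). Your closing observation that full column rank of $A$ is the only point needing a genuine argument is also consistent with the paper, which asserts this without proof in Remark~\ref{rem_1}.
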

\begin{proof}
By applying the steps used to derive (\ref{STEP3}) but w.r.t $\phi_m(x)$, one can obtain (\ref{STEP3_m}) and therefore obtain (\ref{STEP3_LAE}).
\end{proof}
%
%
%
\begin{remark}\label{rem_1}
 The structure of the matrix $\mathcal{A}$ along with the nice regularity properties of the modulating functions confirm that 
 $\mathcal{A}$ has always full-column rank; hence, system (\ref{system_ct_compound}) has always a unique solution; and therefore, modulating functions-based solution is a well-posed problem in the strict mathematical sense.  However, it can be numerically  ill-conditioned and therefore exhibit numerical instability which is mainly, in our method, related to the nature and the number of the basis and the modulating functions; therefore, these factors should be chosen appropriately.
\end{remark}
%
\section{Source and velocity estimation for the wave equation}\label{sec_example1} 
\subsection{Method} 
Consider the following one-dimensional wave equation in the domain $\Omega~:= (0,L) \times (0,T]$:
\begin{equation}
\left\{
\begin{array}{llll}\label{wave equation}
 u_{tt}(x,t) - c(x,t) u_{xx}(x,t)=f(x,t), & & & (x,t) \in \Omega \\
 u(0,t)=g_1(t), \ \  u(L,t)=g_2(t), &   &  & t\in [0,T]\\
 u(x,0)=r_1(x),\ \  u_t(x,0)=r_2(x),&  & & x\in (0,L)
\end{array}
\right.
\end{equation}
where $g_i(t)$, $r_i(x)$, $i=1,2$, are respectively the boundary and the initial conditions which are assumed not to be  necessarily known.  The source function is denoted by $f(x,t)$. $c(x,t)$ is the square of the velocity of wave propagation at  point $x$ and time $t$, and it is assumed to be positive and bounded. All the functions are assumed to be sufficiently regular.
\par The goal of this example is to illustrate how modulating functions-based method can be applied to solve IP1, IP2, and IP3. Here, the measurements are the displacement $u(x,t^*)$ and the acceleration $u_{tt}(x,t^*)$ at a fixed time instant. Next propositions study these different problems.
\begin{proposition}\label{IP12}
Let $ \sum_{i=1}^I \gamma_i \xi_i(x)$ be a basis expansion of the unknown, source (IP1) or velocity (IP2),  in (\ref{wave equation}) at a fixed time $t^*$, where  $\xi_i(x)$  and $\gamma_i$, for $i=1,\cdots,I$,  are basis functions and basis coefficients, respectively. Let $\{\phi_m(x)\}_{m=1}^{m=M}$ be a class of at least second order modulating functions with $M \ge I$. Then, the unknown coefficients $\gamma_i$, $i=1,2,\cdots,I$, can be estimated by solving the system: 
\begin{equation}\label{system_ct_compound}
 {\bf \mathcal{A}} { \Gamma} = {K},
\end{equation}
where the components of the $M \times I$ matrix $ {\bf \mathcal{A}}$ have the form:
\begin{eqnarray}\label{system_ct_compound_compoC}
\left.
\begin{array}{ll} 
\mbox{IP1: } &\mathcal{A}_{mi} =  \displaystyle{\int_0^L} \phi_m(x) \xi_i(x) \, \mathrm{d}x,\\
\mbox{IP2: } &\mathcal{A}_{mi}=  \displaystyle{\int_0^L} u(x,t^*) \left[ \xi_i''(x) \phi_m (x)+\right.\\
& \qquad \qquad  2 \xi_i' (x)  \phi'_m(x)+ \left. \xi_i (x) \phi_m'' (x)\right] \, \mathrm{d}x,
\end{array}
\right.
\end{eqnarray}
for $m=1,\cdots,M$ and $i=1,\cdots,I$; the components of the vector ${K} \in \mathbb{R}^M$ are:
 \begin{eqnarray}\label{system_ct_compound_compoK}
 \left.
\begin{array}{ll} 
\mbox{IP1: } & K_m= \displaystyle{\int_0^L} \phi_m(x) u_{tt}(x,t^*) \mathrm{d}x - c \int_0^L \phi^{\prime\prime}_m(x) u(x,t^*)\, \mathrm{d}x,\\
\mbox{IP2: }  & K_m=  \displaystyle{\int_0^L} [u_{tt}(x,t^*) - f(x,t^*) ] \phi_m(x)\, \mathrm{d}x,
\end{array}
\right.
\end{eqnarray}
and ${ \Gamma}$ is the vector of the unknowns $\gamma_i$, $i=1,\cdots,I$.
\end{proposition}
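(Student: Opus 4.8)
The proposition is a direct specialization of the three-step procedure of Section~\ref{sec_method} (equivalently Proposition~\ref{prop_main}) to the wave equation~(\ref{wave equation}), for which $\mathbb{T}u=u_{tt}$ and $\mathbb{P}u=-c(x,t)u_{xx}$, so that $\bar{n}=2$, $b_2=-c$ and $b_1=0$. The plan is to fix $t=t^*$ in~(\ref{wave equation}), multiply by a modulating function $\phi_m$ of order at least $2$, integrate over $(0,L)$, and integrate by parts twice in the term $\int_0^L c\,u_{xx}(x,t^*)\,\phi_m\,\mathrm{d}x$. The boundary terms that appear are $u$ and $u_x$ evaluated at $x=0,L$ paired with $c\phi_m$ and $(c\phi_m)'$; these vanish because a modulating function of order $\ge 2$ has $\phi_m$ and $\phi_m'$ vanishing at both endpoints while $u$ and $u_x$ are bounded there — this is exactly the role of the hypothesis ``at least second order.'' What remains is
\[
\int_0^L u_{tt}(x,t^*)\,\phi_m\,\mathrm{d}x-\int_0^L u(x,t^*)\,\partial_x^2\!\big(c(x,t^*)\phi_m\big)\,\mathrm{d}x=\int_0^L f(x,t^*)\,\phi_m\,\mathrm{d}x,
\]
which is~(\ref{STEP3_m}) with $\mathbb{Q}\phi_m=-\partial_x^2(c\phi_m)=-\big(c''\phi_m+2c'\phi_m'+c\phi_m''\big)$.

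For IP1 ($c$ known, $f(x,t^*)=\sum_{i=1}^I\gamma_i\xi_i(x)$ unknown) I would substitute the expansion on the right, move everything known to the left to form $K_m$, and read off $\mathcal{A}_{mi}=\int_0^L\phi_m\xi_i\,\mathrm{d}x$; for constant $c$ the left side is $\int_0^L\phi_m u_{tt}(x,t^*)\,\mathrm{d}x-c\int_0^L\phi_m''\,u(x,t^*)\,\mathrm{d}x$, which is the stated $K_m$. For IP2 ($f$ known, $c(x,t^*)=\sum_{i=1}^I\gamma_i\xi_i(x)$ unknown) I would instead move the known data to the left, giving $\int_0^L\big(u_{tt}(x,t^*)-f(x,t^*)\big)\phi_m\,\mathrm{d}x=\int_0^L u(x,t^*)\,\partial_x^2(c\phi_m)\,\mathrm{d}x$, expand $\partial_x^2(c\phi_m)=c''\phi_m+2c'\phi_m'+c\phi_m''$ by the Leibniz rule, and insert $c^{(k)}(x,t^*)=\sum_i\gamma_i\xi_i^{(k)}(x)$ for $k=0,1,2$ (legitimate since the $\xi_i$ are taken $C^2$). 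Collecting the coefficient of $\gamma_i$ yields $\mathcal{A}_{mi}=\int_0^L u(x,t^*)\big[\xi_i''\phi_m+2\xi_i'\phi_m'+\xi_i\phi_m''\big]\,\mathrm{d}x$ and $K_m=\int_0^L\big(u_{tt}(x,t^*)-f(x,t^*)\big)\phi_m\,\mathrm{d}x$, matching~(\ref{system_ct_compound_compoC})--(\ref{system_ct_compound_compoK}).

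Letting $m$ range over $1,\dots,M$ then stacks these $M$ scalar identities into $\mathcal{A}\Gamma=K$ with $\mathcal{A}\in\mathbb{R}^{M\times I}$ and $K\in\mathbb{R}^M$; since $M\ge I$ and, by Remark~\ref{rem_1}, the regularity of the modulating functions makes $\mathcal{A}$ of full column rank, solving this system recovers $\Gamma$ and hence the unknown function at $t^*$. The only step with genuine content — the rest is bookkeeping inherited from Proposition~\ref{prop_main} — is the vanishing of the boundary terms in the double integration by parts: one must check that the order-$2$ endpoint conditions on $\phi_m$ together with the standing boundedness of $u$ and $u_x$ at $0$ and $L$ eliminate every boundary contribution, in both the constant-$c$ case (IP1) and the variable-$c$ case (IP2). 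I would also note that the formula for $K_m$ in IP1 is written as if $c$ were constant; for space-dependent $c$ the same argument goes through, but $K_m$ acquires the extra terms $-\int_0^L u(x,t^*)\big(c''\phi_m+2c'\phi_m'\big)\,\mathrm{d}x$.
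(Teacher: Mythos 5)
Your proposal is correct and follows essentially the same route as the paper's Appendix~\ref{app_IP12}: fix $t=t^*$, multiply by $\phi_m$, integrate over $(0,L)$, integrate by parts twice (with the boundary terms killed by the order-$2$ endpoint conditions), and substitute the basis expansion of the unknown to read off $\mathcal{A}_{mi}$ and $K_m$. Your closing observation that the stated IP1 formula for $K_m$ presumes constant $c$ is accurate and consistent with the paper, which likewise writes $c$ outside the integral in its IP1 derivation.
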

\begin{proof} The proof follows the steps described previously in Subsection \ref{sec_method}; see Appendix~\ref{app_IP12}.
\end{proof}
%
\begin{remark}\label{rem_2}
 Proposition \ref{IP12} can be directly applied to estimate space-dependent velocity $c(x)$ or space-time-dependent velocity $c(x,t)$. For constant velocity case, the method is simpler and more accurate since there is no need for approximating the constant $c$ using basis expansion or any other approximation. In this case, the estimated velocity is: 
\begin{equation}
\hat{c} =\frac{ {\bf \mathcal{A}}^t{K}}{ {\bf  \mathcal{A}}^t{\bf  \mathcal{A}}}.
\end{equation}
where the components of the vectors ${\bf  \mathcal{A}}$ and ${K}$ have the form
\begin{equation}
\mathcal{A}_m=\int_0^L \phi_{m}^{\prime\prime}(x) u(x,t^*) \, \mathrm{d}x, \quad m=1,\cdots,M,
\end{equation}
and 
\begin{equation}
K_m= \int_0^L \phi_m(x) \left[ u_{tt}(x,t^*)- f(x,t^*) \right] \, \mathrm{d}x,  \quad m=1,\cdots,M,
\end{equation}
respectively.
\end{remark}
In the next proposition, the joint estimation problem (IP3) is studied.
\begin{proposition}\label{joint}
Let $f_I(x,t^*) = \sum_{i=1}^{I} \gamma_i \xi_i(x)$ and  $c_J(x,t^*) = \sum_{j=1}^{J} \beta_j \upsilon_j(x)$ be  basis expansion for the unknown source and  velocity, respectively; where  $\xi_i(x)$, $\upsilon_j(x)$  and $\gamma_i$, $\beta_j$, for $i=1,\cdots,I$ and $j=1,\cdots,J$, are basis functions and basis coefficients, respectively. Let $\{\phi_m(x)\}_{m=1}^{m=M}$ be a class of modulating functions with $M \ge I+J$ and $l \ge 2$. Then, the unknown coefficients $\gamma_i$ and $\beta_j$ can be estimated by solving the system: 
\begin{equation}\label{system_joint}
 {\bf \mathcal{B}} { \digamma} = {Q},
\end{equation}
where  ${\bf \mathcal{B}}$ is  $M \times (I+J)$ matrix which can be written as ${\bf \mathcal{B}}=[\Xi \quad \Upsilon]$ such that $\Xi$ is $M \times I$ matrix with components:
\begin{equation}\label{F_joint}
\Xi_{mi} =  \int_0^L \phi_m(x) \xi_i(x) \, \mathrm{d}x, \qquad i=1,\cdots,I; \, m=1,\cdots,M,
\end{equation}
and $\Upsilon$ is $M \times J$ matrix with components:
\begin{equation}\label{C_joint}
\Upsilon_{mj} =   \int_0^L u(x,t^*)  \left[ \upsilon_j^{\prime\prime}(x) \phi_m(x)+ 2 \upsilon_j^{'}(x) \phi_m ^{'}(x)+  \upsilon_j(x) \phi^{\prime\prime}_m(x) \right]\, \mathrm{d}x,
\end{equation}
for $m=1,\cdots,M$ and $j=1,\cdots,J$. The components of the vector ${Q}$ are:
 \begin{equation}\label{K_joint}
 Q_m=  \int_0^L  u_{tt}(x,t^*) \phi_m(x) \, \mathrm{d}x,
\end{equation}
and  the vector of the unknowns is 
\begin{equation}\label{Gamma_joint}
{ \digamma}=
\left[
\begin{array}{cccccccc}
\gamma_1 &\gamma_2&\cdots&\gamma_I&\beta_1 &\beta_2&\cdots&\beta_J 
\end{array}
\right]^{tr}.
\end{equation}
Here $tr$ refers to transpose. $u(x,t^*)$ and $u_{tt}(x,t^*)$ are the measurements.
\end{proposition}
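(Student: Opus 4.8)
The plan is to run the three-step modulating-functions procedure of Subsection~\ref{sec_method} on the wave equation~(\ref{wave equation}), now with the source and the velocity replaced by their finite basis expansions, and then to stack the $M$ resulting identities into the announced system. First I would fix the time at $t^{*}$ in $u_{tt}-c\,u_{xx}=f$, multiply by a modulating function $\phi_{m}$ with $l\ge2$, and integrate over $(0,L)$, which gives
\[
\int_{0}^{L}u_{tt}(x,t^{*})\phi_{m}(x)\,\mathrm{d}x-\int_{0}^{L}c(x,t^{*})\,u_{xx}(x,t^{*})\phi_{m}(x)\,\mathrm{d}x=\int_{0}^{L}f(x,t^{*})\phi_{m}(x)\,\mathrm{d}x .
\]

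The core step is then to integrate the middle term by parts twice so that both spatial derivatives of $u$ are moved onto the product $c(\cdot,t^{*})\phi_{m}$, via $\int_{0}^{L}u_{xx}(c\phi_{m})\,\mathrm{d}x=\bigl[u_{x}\,c\phi_{m}-u\,(c\phi_{m})^{\prime}\bigr]_{0}^{L}+\int_{0}^{L}u\,(c\phi_{m})^{\prime\prime}\,\mathrm{d}x$. I would discard the boundary bracket using property~(\ref{prop_modu}.b) --- which for $l\ge2$ annihilates $\phi_{m}^{\prime}$ at the endpoints, and, for the class of modulating functions actually used, $\phi_{m}$ itself --- together with the assumed endpoint boundedness of $u$ and $u_{x}$. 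Expanding $(c\phi_{m})^{\prime\prime}=c^{\prime\prime}\phi_{m}+2c^{\prime}\phi_{m}^{\prime}+c\,\phi_{m}^{\prime\prime}$ then leaves an identity in which only the measurements $u(\cdot,t^{*}),u_{tt}(\cdot,t^{*})$, the known modulating functions, and the two unknowns $f$ and $c$ appear.

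Next I would substitute $f(x,t^{*})=\sum_{i=1}^{I}\gamma_{i}\xi_{i}(x)$ and $c(x,t^{*})=\sum_{j=1}^{J}\beta_{j}\upsilon_{j}(x)$ and use linearity in the coefficients: the term $\int_{0}^{L}f\phi_{m}$ becomes $\sum_{i}\gamma_{i}\Xi_{mi}$ with $\Xi_{mi}=\int_{0}^{L}\phi_{m}\xi_{i}$, the term $\int_{0}^{L}u\,(c\phi_{m})^{\prime\prime}$ becomes $\sum_{j}\beta_{j}\Upsilon_{mj}$ with $\Upsilon_{mj}=\int_{0}^{L}u(x,t^{*})[\upsilon_{j}^{\prime\prime}\phi_{m}+2\upsilon_{j}^{\prime}\phi_{m}^{\prime}+\upsilon_{j}\phi_{m}^{\prime\prime}]\,\mathrm{d}x$, and the left-hand side is $Q_{m}=\int_{0}^{L}u_{tt}(x,t^{*})\phi_{m}$. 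Letting $m=1,\dots,M$ this is exactly $\mathcal{B}\digamma=Q$ with $\mathcal{B}=[\Xi\ \Upsilon]$ and $\digamma$ as in~(\ref{Gamma_joint}); I would finish by noting that $M\ge I+J$ makes the system (over)determined and that its unique solvability follows from the full-column-rank property of $\mathcal{B}$ recorded in Remark~\ref{rem_1}.

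I expect two points to require care. The minor one is the vanishing of the boundary terms: (\ref{prop_modu}.b) with $l=2$ supplies only $\phi_{m}^{\prime}(0)=\phi_{m}^{\prime}(L)=0$, so the contributions $u_{x}\,c\phi_{m}$ and $u\,(c^{\prime}\phi_{m}+c\phi_{m}^{\prime})$ are eliminated only if one additionally has $\phi_{m}(0)=\phi_{m}(L)=0$ --- which holds for the modulating functions one actually uses --- plus the stated boundedness of $u$ and $u_{x}$ at $0$ and $L$. The substantive one is guaranteeing that $\mathcal{B}$ really has full column rank: since the joint problem entangles $f$ with $c\,u_{xx}$, identifiability of $(\gamma,\beta)$ is not automatic and restricts the admissible choice of $\{\xi_{i}\}$, $\{\upsilon_{j}\}$ and $\{\phi_{m}\}$; this is the conditioning caveat flagged in Remark~\ref{rem_1}, and pinning it down cleanly is the main obstacle.
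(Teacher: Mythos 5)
Your proposal follows essentially the same route as the paper's own proof (Appendix~\ref{app_joint}, which simply repeats the IP2 derivation of Appendix~\ref{app_IP2}): multiply the wave equation at $t^*$ by $\phi_m$, integrate over $(0,L)$, integrate the $c\,u_{xx}$ term by parts twice to obtain $\int_0^L u\,(c\phi_m)^{\prime\prime}\,\mathrm{d}x$, expand $(c\phi_m)^{\prime\prime}=c^{\prime\prime}\phi_m+2c^\prime\phi_m^\prime+c\phi_m^{\prime\prime}$, substitute the two basis expansions, and stack over $m=1,\dots,M$. Your two caveats --- that property~(\ref{prop_modu}.b) as stated only kills $\phi_m^{(p)}$ for $p\ge 1$ so the boundary bracket also needs $\phi_m(0)=\phi_m(L)=0$, and that the full column rank of $\mathcal{B}$ is asserted in Remark~\ref{rem_1} rather than proved --- are valid observations about gaps the paper itself leaves open, but they do not alter the argument.
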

\begin{proof} Appendix \ref{app_joint} provides the steps of the proof.
\end{proof}
\subsection{Error Analysis}\label{sec_error} 
It is known that measured data are noisy. Therefore, it would be interesting to study the effect of noise on the modulating functions-based method.  We have shown in the previous subsection that the studied inverse problems are simplified into solving a system of linear equations, say $\mathcal{A}\Gamma=K$. 
 Here, we  are going to study the noise error contribution in the load vector $K$ of IP1. Similarly, one can analyze the error in IP2 and IP3.
\par Let  $h_1(x)$ and $h_2(x)$ be bounded noises, i.e. $\|h_1(x)\|_\infty= \delta_1$ and  $\|h_2(x)\|_\infty= \delta_2$, such that:
\begin{equation}\label{noisy_data}
\left\{
\begin{array}{l}
\hat{u} (x,t^*) = u(x,t^*) + h_1(x),  \\
\hat{u}_{tt} (x,t^*) = u_{tt}(x,t^*) + h_2(x); 
\end{array}
\right.
\end{equation}
If $\underline{e}$ represents the noise error contribution vector of IP1, then from (\ref{system_ct_compound_compoK})
\begin{equation}\label{err_vector}
\underline{e}=
\left[
\begin{array}{cccc}
  e_1& e_2   & \cdots & e_M 
\end{array}
\right]^{tr};
\end{equation}
where 
\begin{equation}\label{err_compo}
e_m = \hat{K}_m -K_m =  \int_0^L \left[ \phi_m(x) h_2(x) - c  \phi^{\prime\prime}_m(x) h_1(x) \right] \mathrm{d}x.
\end{equation}
Clearly from (\ref{err_compo}), this error depends not only on the noise but also on the functions  $ \phi_m(x)$ and $ \phi^{\prime\prime}_m(x)$. However, $\phi_m(x)$  is a continuous function in the closed bounded interval $[0,L]$; thus, by the boundedness theorem, $\phi_m(x)$ is bounded. That is, there exists a real number $\nu_1$ such that
\begin{equation}
|\phi_m(x)| \le \nu_1 \qquad \forall x \in [0,L].
\end{equation}
Similarly for $\phi^{\prime\prime}_m(x)$: 
\begin{equation}
|\phi^{\prime\prime}_m(x)| \le \nu_2 \qquad \forall x \in [0,L];
\end{equation}
where $\nu_2\in \mathbb{R}$. In the next proposition, we give an error bound for this noise error contribution vector.
\begin{proposition}
Let  $\nu~=~\max\{\nu_1,\nu_2\}$ and $\delta~=~\max\{\delta_1,\delta_2\}$; then
\begin{equation}\label{bound_error}
\|\underline{e} \|_1 \le M  \nu \delta (1 + c) L,
\end{equation}
where $\underline{e}$ is given in (\ref{err_vector}).
\end{proposition}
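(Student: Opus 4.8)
The plan is to bound $\|\underline{e}\|_1$ componentwise and then sum over the $M$ modulating functions, exploiting the uniform bounds $\nu_1$, $\nu_2$, $\delta_1$, $\delta_2$ recorded just before the statement. Starting from the explicit expression (\ref{err_compo}) for $e_m$, I would first apply the triangle inequality for integrals to separate the two noise contributions, using that $c$ is a positive constant so that it may be pulled out of the integral:
\[
|e_m| \;\le\; \int_0^L |\phi_m(x)|\,|h_2(x)|\,\mathrm{d}x \;+\; c \int_0^L |\phi_m^{\prime\prime}(x)|\,|h_1(x)|\,\mathrm{d}x .
\]

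Next I would insert the pointwise bounds already established: $|\phi_m(x)| \le \nu_1$ and $|\phi_m^{\prime\prime}(x)| \le \nu_2$ for all $x \in [0,L]$ (consequences of the boundedness theorem applied to the continuous functions $\phi_m$ and $\phi_m^{\prime\prime}$ on the compact interval $[0,L]$), together with the noise bounds $\|h_1\|_\infty = \delta_1$ and $\|h_2\|_\infty = \delta_2$. Replacing each factor by $\nu = \max\{\nu_1,\nu_2\}$ and $\delta = \max\{\delta_1,\delta_2\}$ and evaluating the remaining trivial integrals $\int_0^L \mathrm{d}x = L$ yields, uniformly in $m$,
\[
|e_m| \;\le\; \nu\,\delta\, L \;+\; c\,\nu\,\delta\, L \;=\; \nu\,\delta\, L\,(1+c), \qquad m=1,\dots,M .
\]
Summing this uniform estimate over $m = 1,\dots,M$ then gives $\|\underline{e}\|_1 = \sum_{m=1}^M |e_m| \le M\,\nu\,\delta\,(1+c)\,L$, which is the claimed bound.

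There is no genuine obstacle here; the argument is just a chain of the triangle inequality, substitution of uniform bounds, and a finite sum. The only point worth flagging is that the estimate is deliberately crude — it uses the worst-case constants $\nu$ and $\delta$ and treats all $M$ terms as maximal. If a sharper bound were desired, one could instead retain $\|\phi_m\|_{L^2}$ and $\|\phi_m^{\prime\prime}\|_{L^2}$ and invoke the H\"older (Cauchy--Schwarz) inequality, exactly as in the proof of the stability theorem; but for the purpose of exhibiting the linear growth of the noise contribution in $M$ and in the noise level $\delta$, the stated form is sufficient.
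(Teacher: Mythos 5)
Your proposal is correct and follows essentially the same route as the paper's proof: the triangle inequality applied inside the integral, substitution of the uniform bounds $\nu$ and $\delta$, evaluation of $\int_0^L \mathrm{d}x = L$, and summation over the $M$ components. The only cosmetic difference is that you bound each $|e_m|$ individually before summing, whereas the paper carries the sum along from the start; the two are equivalent.
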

\begin{proof}
The 1-norm  of (\ref{err_compo}) is:
\begin{equation}
\|\underline{e} \|_1 = \sum_{m=1}^M \left| \int_0^L \phi_m(x) h_2(x) - c \phi^{\prime\prime}_m(x) h_1(x) \, \mathrm{d}x \right|;
\end{equation}
which can be bounded by:
\begin{equation}
\|\underline{e} \|_1 \le  \sum_{m=1}^M \int_0^L  \left| \phi_m(x) h_2(x) - c \phi^{\prime\prime}_m(x) h_1(x)\right|  \, \mathrm{d}x.
\end{equation}
By applying the triangle inequality, one can obtain:
\begin{equation}
\|\underline{e} \|_1 \le \sum_{m=1}^M \int_0^L  \left| \phi_m(x) \right| \left| h_2(x) \right| + c \left|\phi^{\prime\prime}_m(x)\right| \left| h_1(x)\right|  \, \mathrm{d}x.
\end{equation}
From the boundedness of the noise and the modulating functions, we can arrive to:
\begin{equation}
\|\underline{e} \|_1 \le  \delta \sum_{m=1}^M \int_0^L  \nu (1 + c)\, \mathrm{d}x.
\end{equation}
Hence,
\begin{equation}
\|\underline{e} \|_1\le \delta M  \nu (1 + c) L.
\end{equation}
\end{proof}
The inequality in (\ref{bound_error}) proves that the noise error contribution upper bound is affected by the noise in the data ($\delta$), the length of the interval ($L$), and the type of the used modulating functions which is reflected in $\nu$. It also suggests the use of modulating functions which have less maximum norm derivative as well. In addition,  a fixed upper bound for the noise error contribution can be achieved by finding a trade-off between the number of modulating functions $M$ and the level of noise $\delta$.
\par In order to show the efficiency of the presented method, IP1, IP2, and IP3 are simulated numerically in the next subsection.
\subsection{Numerical Simulations}\label{sec_num_wave} 
To perform numerical simulations, system (\ref{wave equation}) has been first discretized using finite differences scheme. Then a set of synthetic data  has been generated using the following parameters: $L=3$, $T=1$, and $N_x=N_t=3001$, where $N_x$ and $N_t$ are space and time grid sizes, respectively. The method has been implemented in Matlab and applied in noise-free and noisy cases. In the noise-corrupted case, $1\%$, $3\%$, $5\%$, and $10\%$ white Gaussian random noises with zero means have been added to the data where the noise level is evaluated  using  $\frac{\|u_{exact}-u_{approximate}\|_2}{\|u_{exact}\|_2} \times ~100$; see Figure~\ref{measurements}.
\begin{figure} 
\centering
\begin{tabular}{cc}
 \epsfig{file= 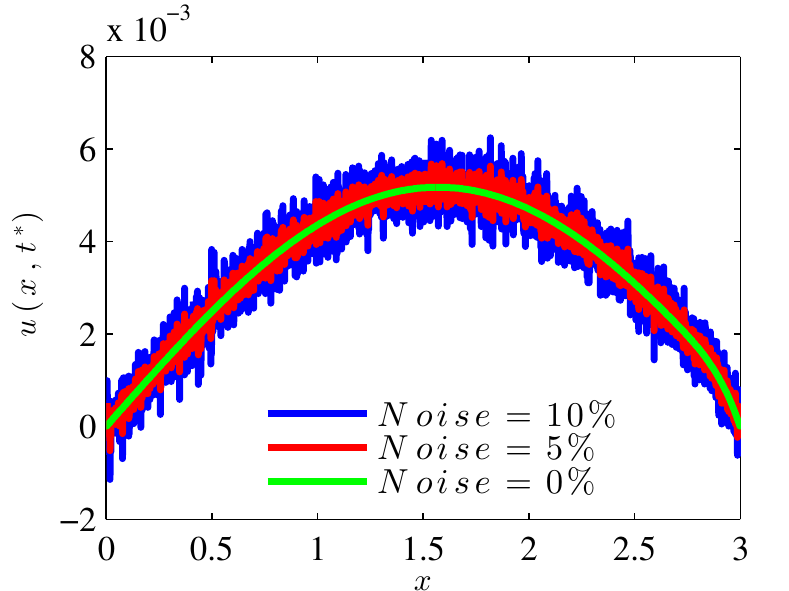,height=0.4\linewidth,width=0.48\linewidth,clip=} & \epsfig{file= 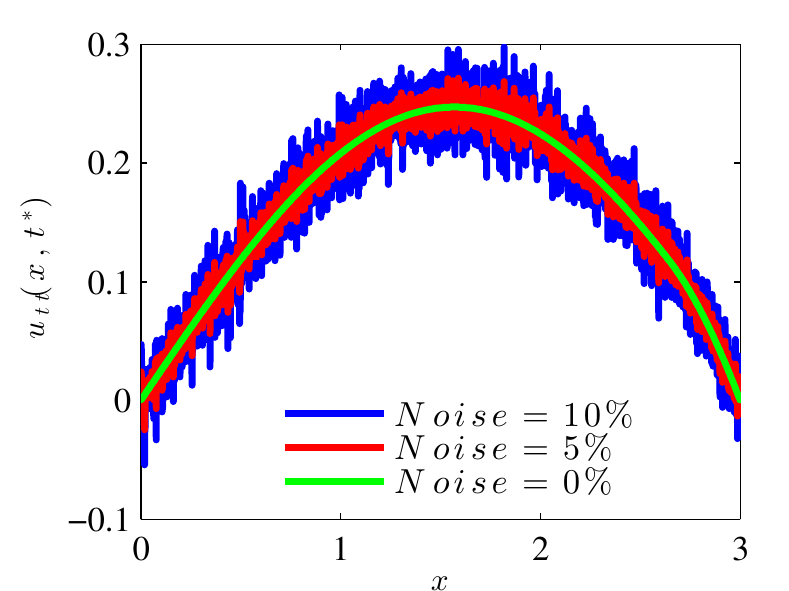,height=0.4\linewidth,width=0.5\linewidth,clip=} \\
 {\bf a} & {\bf b}
\end{tabular}
\caption{Green solid lines represent the exact measurements while the red and blue lines represent the noisy measurements with $5\%$ and $10\%$ of noise, respectively. The sub-figure {\bf a} is for the displacement $u(x,t^*)$ while {\bf b} for  the acceleration $u_{tt}(x,t^*)$; both at fixed time $t^*$.}
\label{measurements}
\end{figure}
Regarding the choice of the modulating functions, as mentioned before there are many functions that satisfy (\ref{prop_modu}), see e.g. ~\cite{Ta:68,PrRi:93,SaRaRa:82,PaUn:95,PeLe:85}. In this paper, polynomial-type modulating functions \cite{LiLaGiPe:13} have been used for their simplicity. They have the following form:
\begin{equation}\label{poly_modu}
\phi_m(x) = (L-x)^{q+m} x^{q+M+1-m},
\end{equation}
where $m=1,2,\cdots,M$, $M$ is the number of modulating functions, and $q \in \mathbb{R}^+$ is a degree of freedom which should be chosen such that $\phi_m(x)$ is of at least second order for all $m$;  see Figure  \ref{polynomial_modulating_functions_ind11_noise5}. For the basis functions $\xi_i(x)$, polynomial basis have been used.
 \begin{figure}
\centering
\epsfig{file=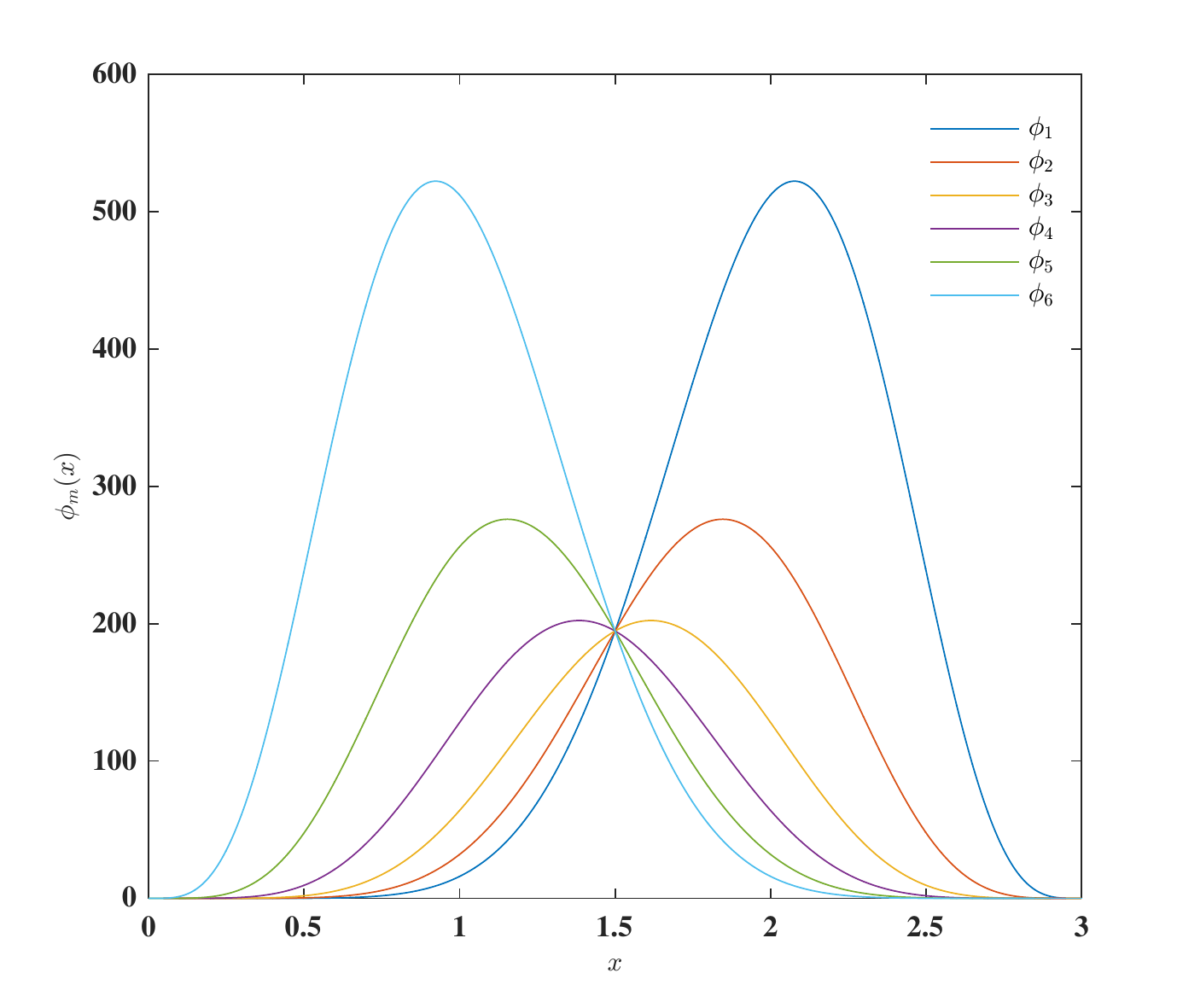,height=0.4\linewidth, width=0.52\linewidth,clip=} 
\caption{Six polynomial modulating functions where $M=6$ and $q$=3. }
\label{polynomial_modulating_functions_ind11_noise5}
\end{figure}
\subsubsection{{\rm IP 1}}
The exact source has been chosen to be $f(x,t)=\sin(x)t^2$, and $c=0.5$. At a fixed time, the exact source and the estimated one versus different noise levels are presented in Figure~\ref{elsevier_f_all}, and the corresponding relative errors are shown in Table~\ref{Relative errors of Fx}.
\begin{figure} 
\centering
\begin{tabular}{cc}
\epsfig{file=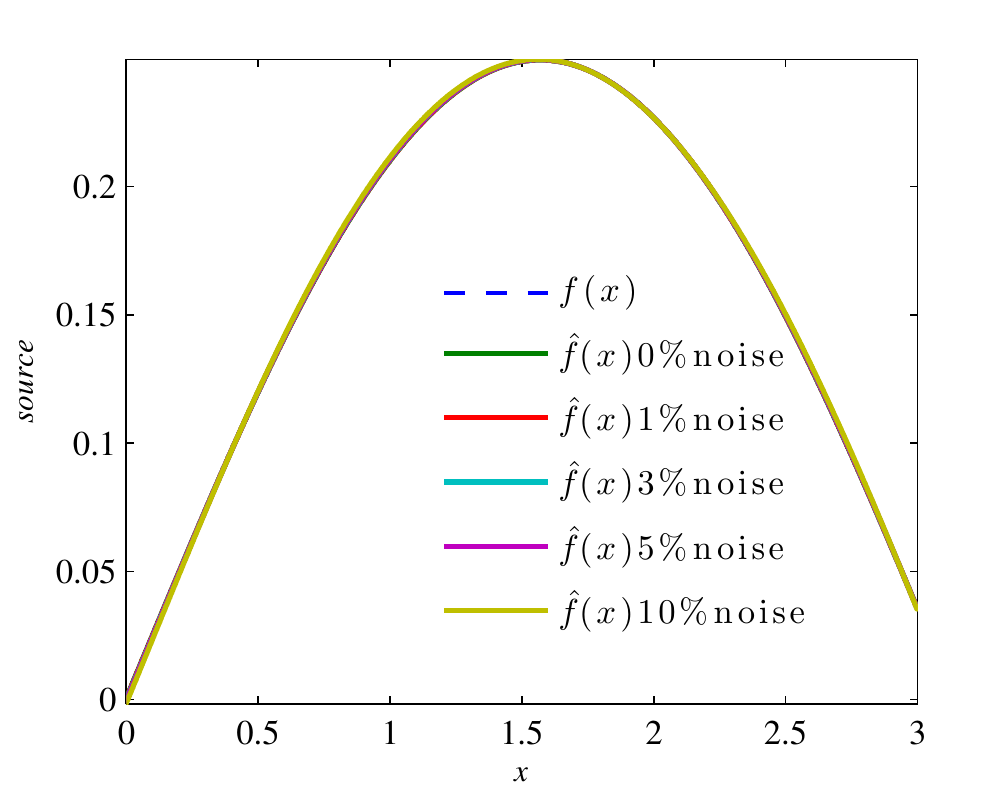,height=0.42\linewidth,width=0.5\linewidth,clip=} & \epsfig{file= 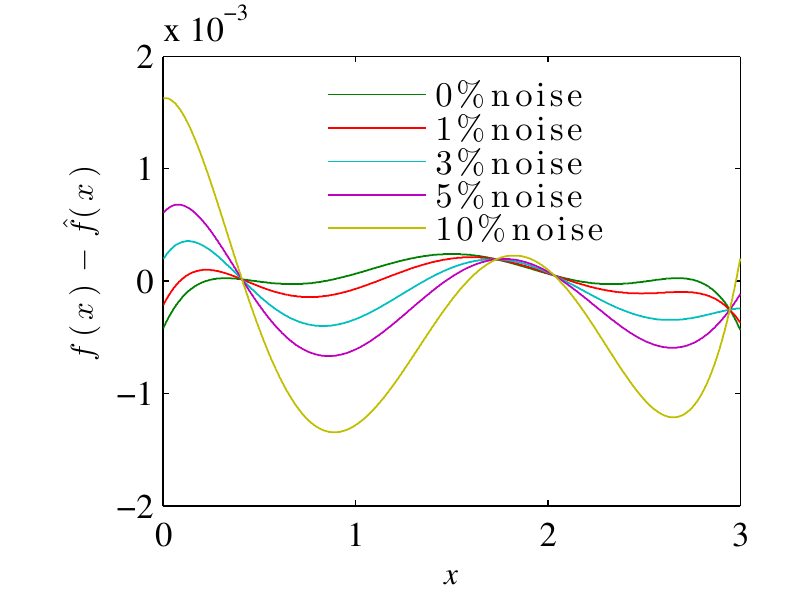, height=0.43\linewidth,width=0.5\linewidth,clip=}\\
 {\bf a} & {\bf b}
\end{tabular}
\caption{{\bf a}: Exact source (blue dashed) and estimated ones (colored solid) w.r.t different noise levels: $0\%,1\%,3\%,5\%,$ and $10\%$ of noise; where $q=3$ and  $M=27$. {\bf b}:~Estimation errors for the results in {\bf a}.}
\label{elsevier_f_all}
\end{figure}
\begin{table}
\caption{Relative errors of $\hat{f}(x)$ versus different noise levels.}
\label{Relative errors of Fx}
\begin{center}
\begin{tabular}{cc} \hline
Noise Level &  Relative Error \\ \hline
0\%	&	    0.0728	\%	 \\
1\%	&	    0.0695	\%	\\
3\%	&	    0.1365	\%	\\
5\%	&	    0.2283	\%	\\
10\%	&	    0.4711	\%	\\ \hline
\end{tabular}
\end{center}
\end{table}
 Figure~\ref{elsevier_f_noise5_3positions} exhibits the exact source and the estimated one at three different times $t_1^*$, $t_2^*$ and $t_3^*$. These results, which are obtained at only three time instants,  are interpolated to estimate the source $f(x,t)$ as shown in Figure~\ref{elsevier_f_interp_noise5}. For the interpolation, there are different methods that can be used to interpolate (estimate) new data points from known ones, see, for example \cite{Ra:96}. Here, polynomial interpolation method has been used for illustration. The relative errors for estimating $f(x,t)$ against different noise levels is presented in Table~\ref{Relative errors of F}. As we can see, even with $10\%$ of noise, the estimation error is only $0.4711	\%$.
\begin{figure} 
\centering
\begin{tabular}{ccc}
\epsfig{file= 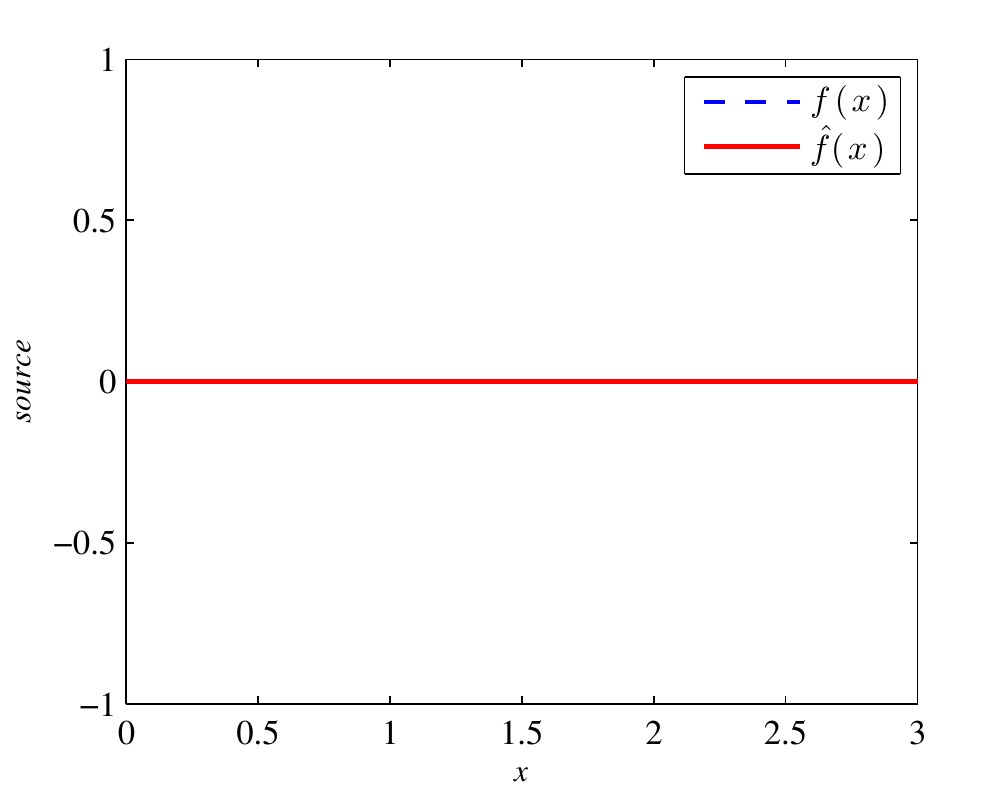,height=0.28\linewidth,width=0.3\linewidth,clip=} & \epsfig{file= 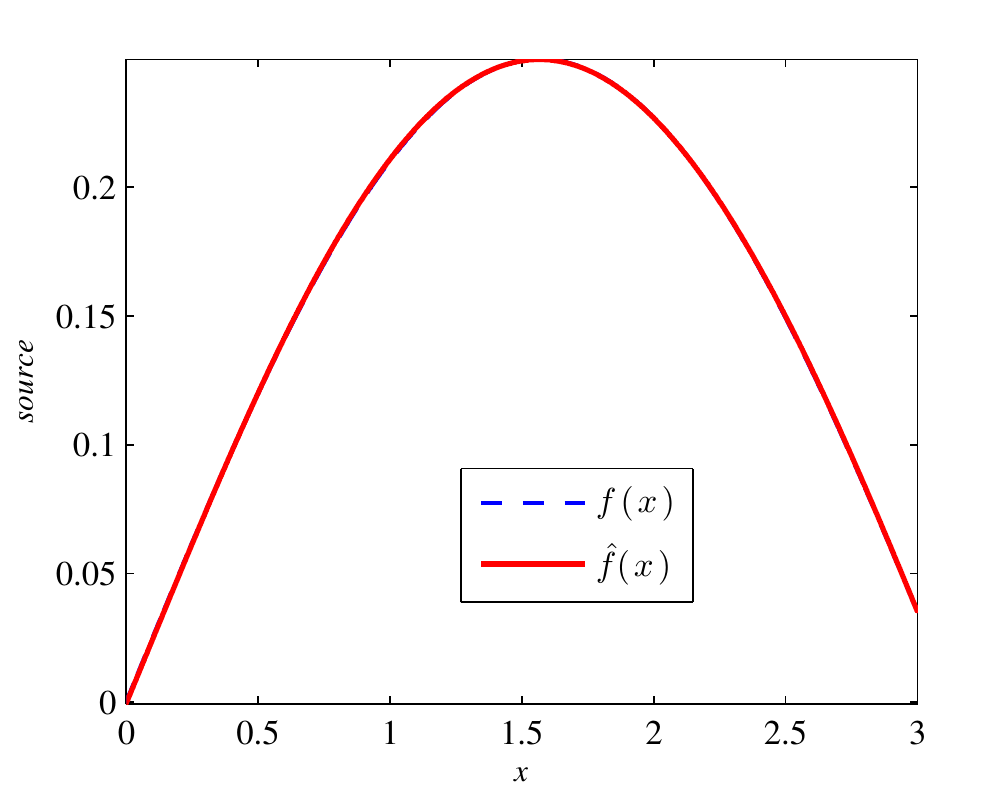,height=0.28\linewidth,width=0.3\linewidth,clip=} & \epsfig{file= 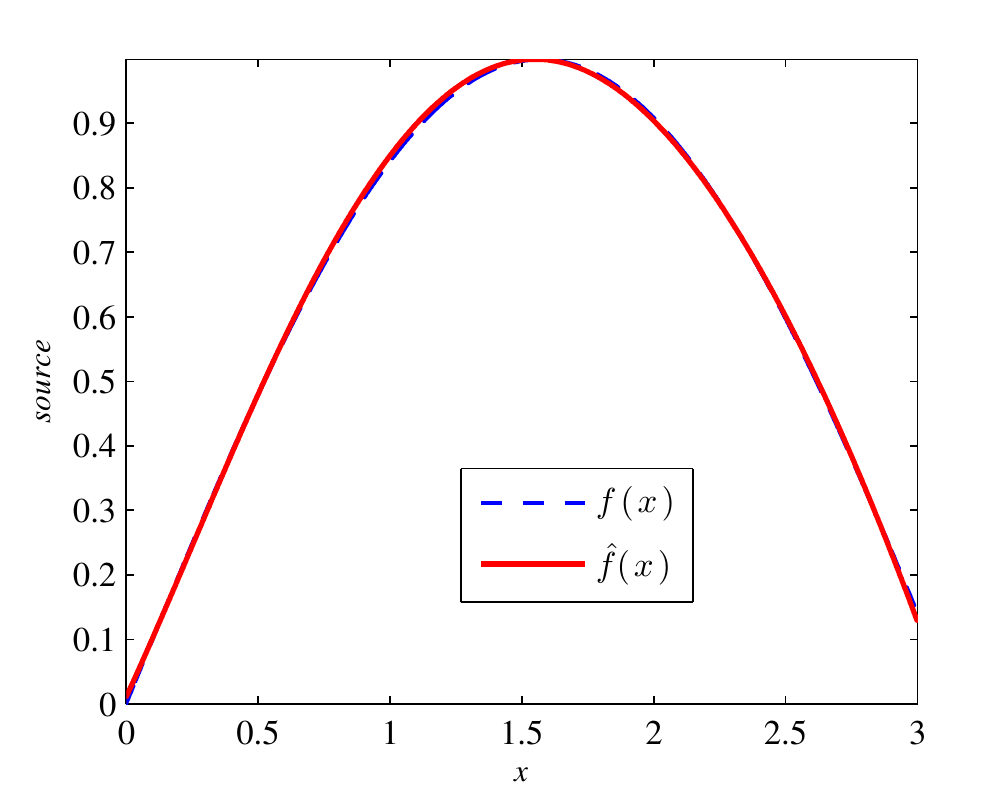,height=0.28\linewidth,width=0.3\linewidth,clip=} \\
$f(x,t^*_1)$ &$f(x,t^*_2)$ & $f(x,t^*_3)$
\end{tabular}
\caption{Exact source (blue dashed) and estimated one (red solid) at three fixed times. Noise level $=5\%, q=3$ and  $M=27$.}
\label{elsevier_f_noise5_3positions}
\end{figure}
\begin{figure} 
\centering
\begin{tabular}{cc}
 \epsfig{file= 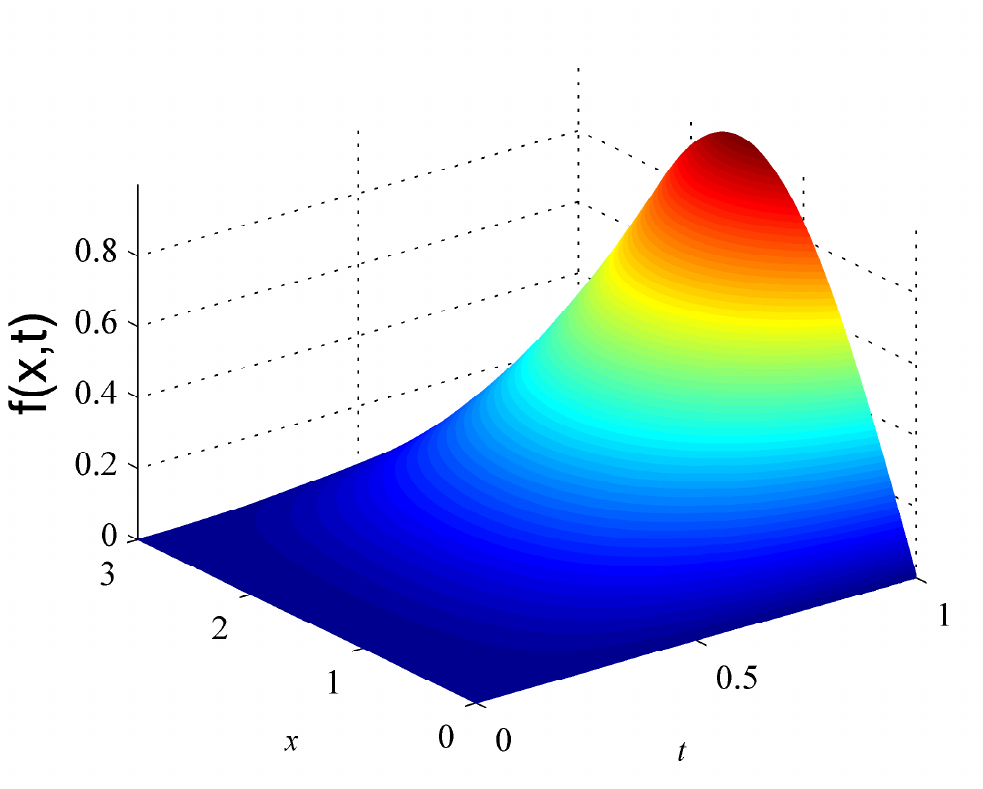,width=0.5\linewidth,clip=} & \epsfig{file= 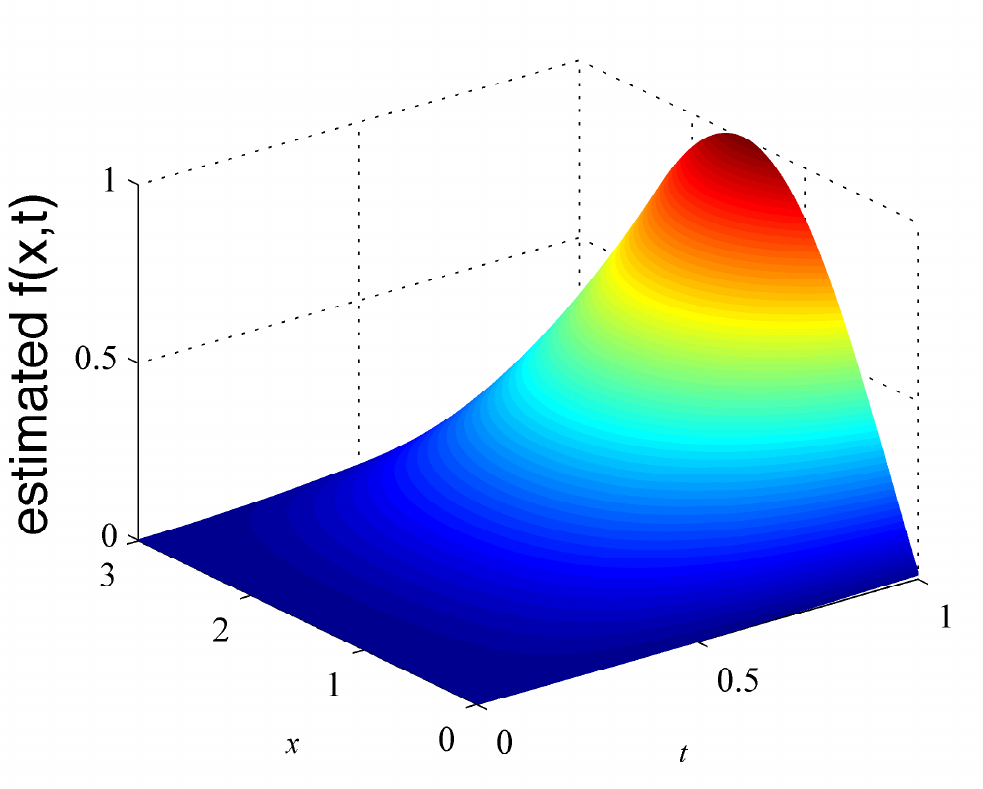,width=0.5\linewidth,clip=} \\
 {\bf a} & {\bf b}
\end{tabular}
\caption{{\bf a}: Exact source $f(x,t)=sin(x)t^2$; {\bf b}: Estimated source after interpolating the data in Figure~\ref{elsevier_f_noise5_3positions}. Noise level $=5\%, q=3$ and  $M=27$.}
\label{elsevier_f_interp_noise5}
\end{figure}
\begin{table} 
\caption{Relative errors of $\hat{f}(x,t)$ versus different noise levels.}
\label{Relative errors of F}
\begin{center}
\begin{tabular}{cc} \hline
Noise Level &  Relative Error \\ \hline
0\%	&	  0.07284	\%	 \\
1\%	&	    0.11102	\%	\\
3\%	&	    0.31809	\%	\\
5\%	&	    0.53732	\%	\\
10\%	&	    1.0905	\%	\\ \hline
\end{tabular}
\end{center}
\end{table}
\par Figure~\ref{bound_error_numer} illustrates the effect of different parameters namely the length of the interval $L$, the number of modulating functions $M$, and the level of noise on the noise error contribution. As we can see, these results confirm the ones we obtained in equation (\ref{bound_error}) in the error analysis subsection.
\begin{figure} 
\centering
\begin{tabular}{ccc}
 \epsfig{file= 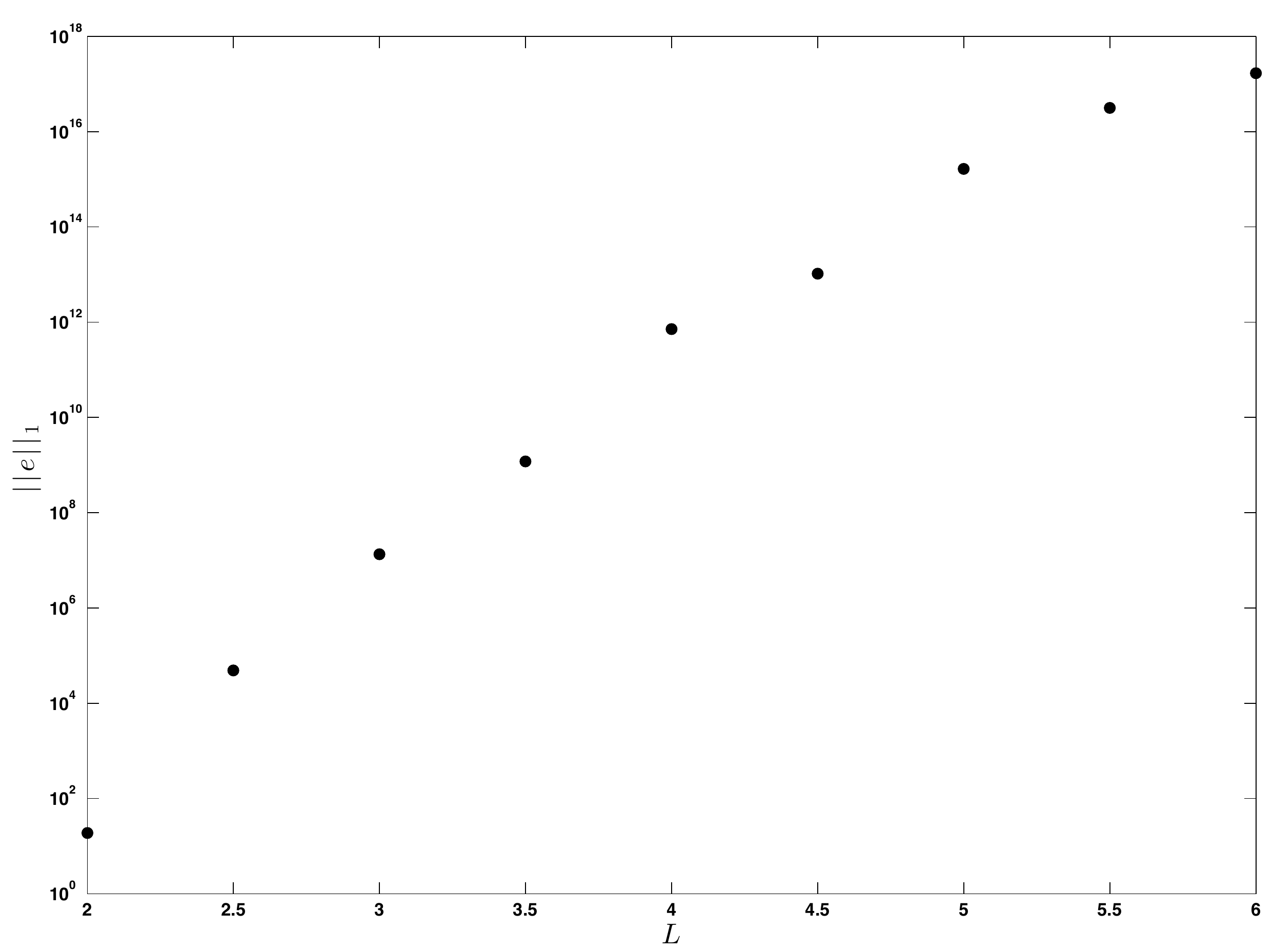,width=0.45\linewidth,clip=}  & \epsfig{file= 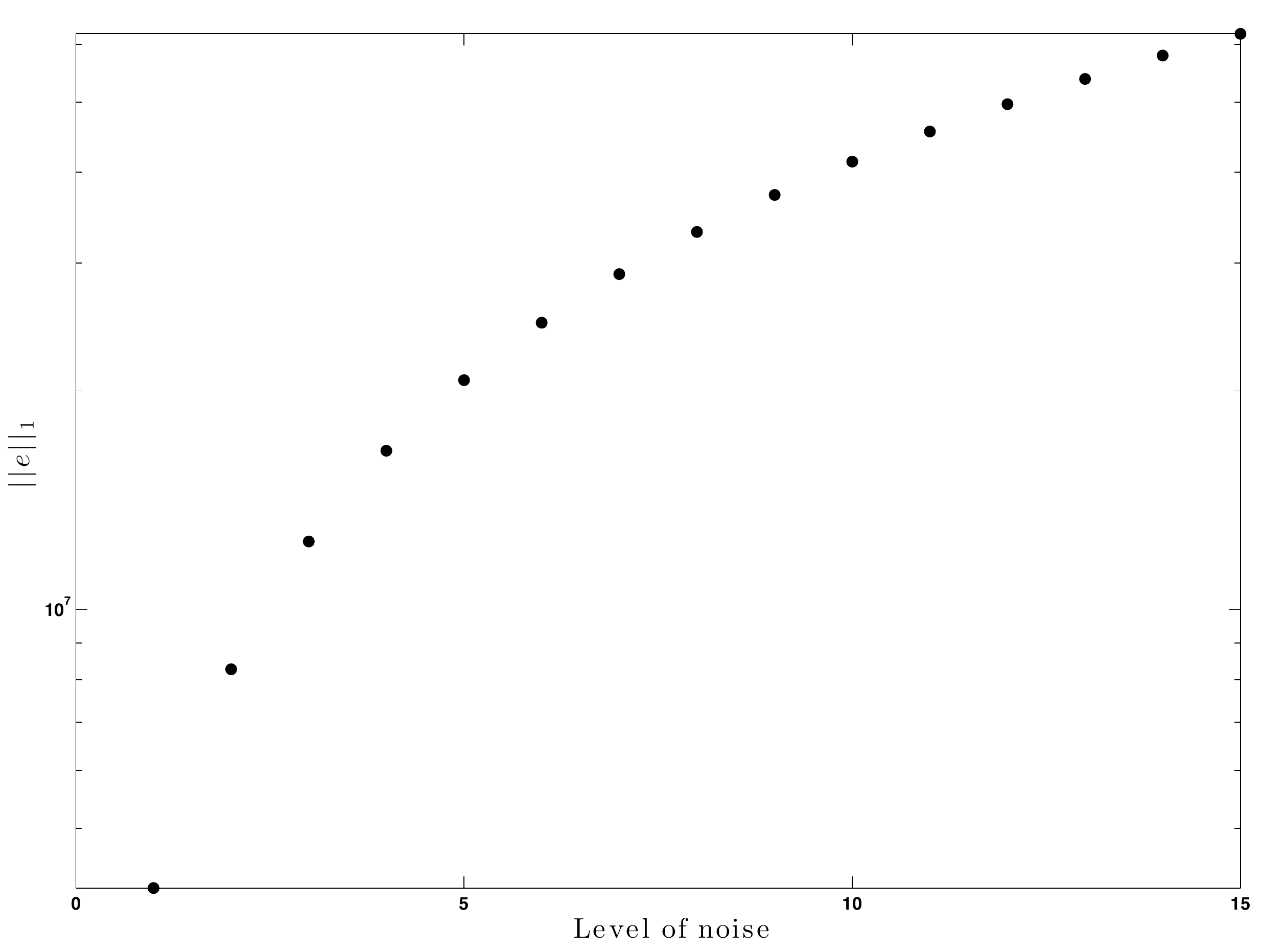,width=0.45\linewidth,clip=} \\
  {\bf a} & {\bf b} \\
  \epsfig{file= 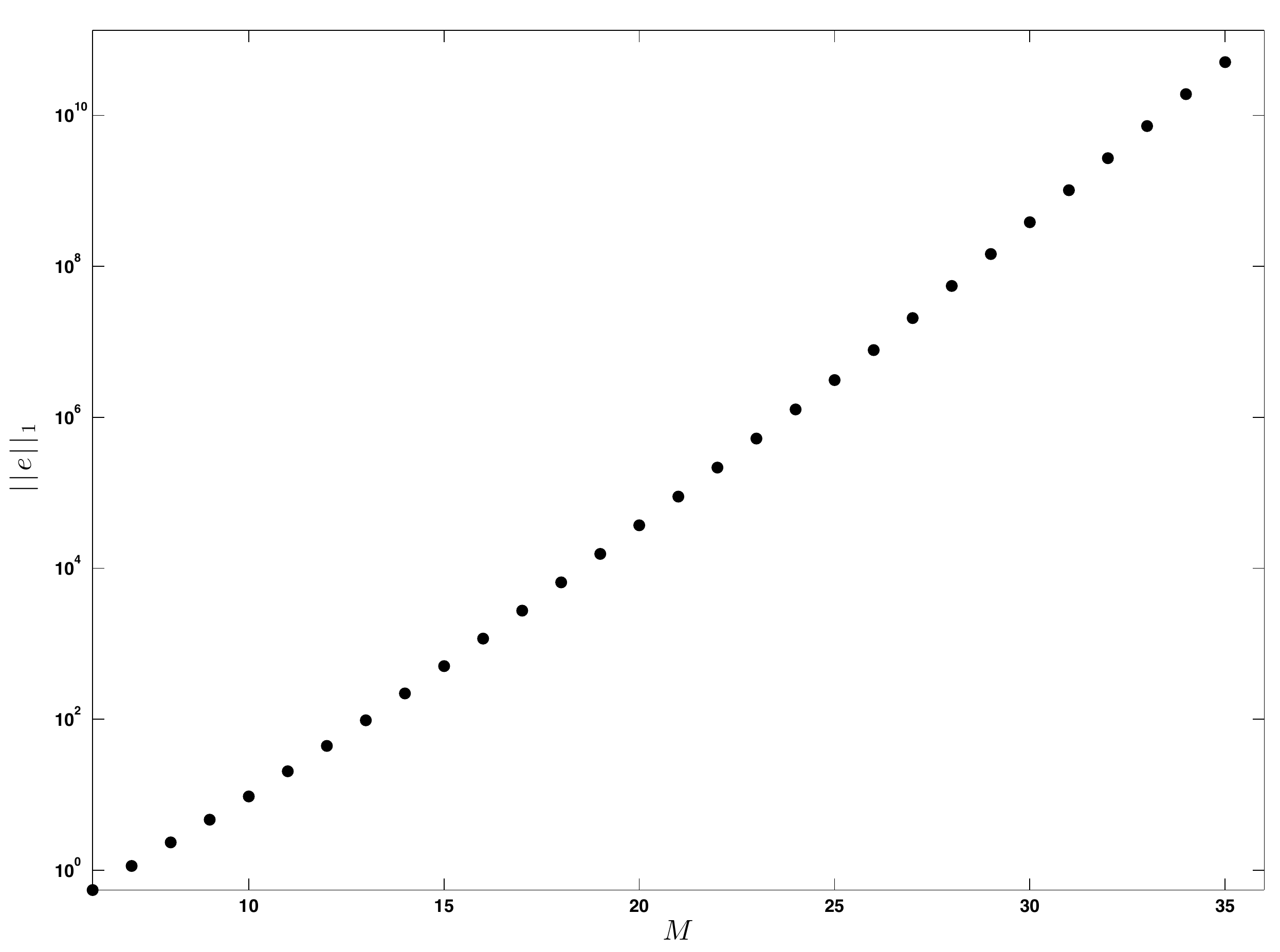,width=0.45\linewidth,clip=} & \epsfig{file= 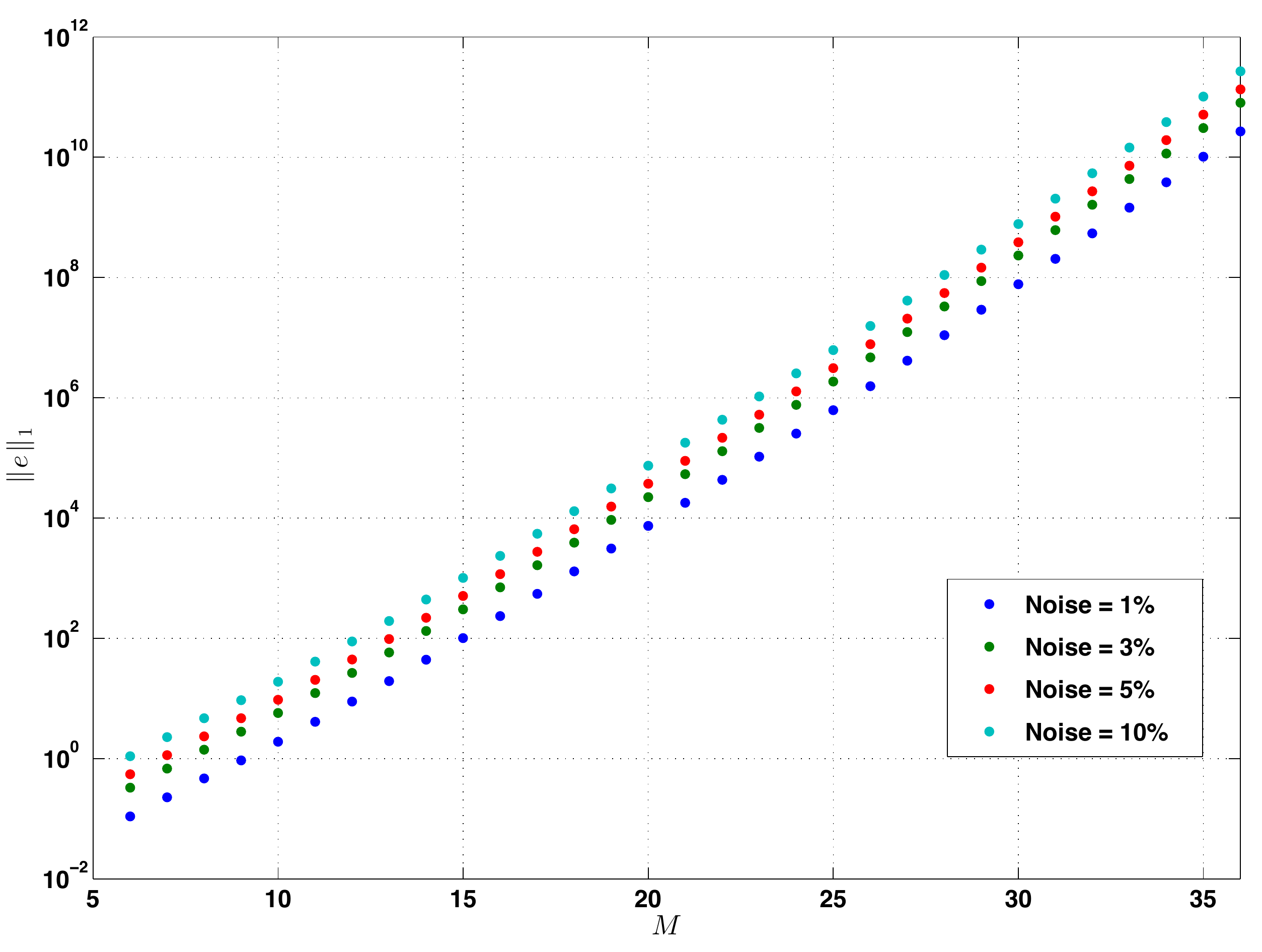,width=0.45\linewidth,clip=}\\
 {\bf c} & {\bf d}
\end{tabular}
\caption{Noise error contribution versus: {\bf a:} The length of the space interval, $L$; {\bf b:} The level of noise;  {\bf c:} The number of modulating functions, $M$; {\bf d:} The number of modulating functions and with different noise levels.}
\label{bound_error_numer}
\end{figure}
\subsubsection*{{\rm IP 1} with measurements interpolation:} In this part, we assume the availability of the measurements at only few points; i.e. we have $u(x_i,t^*)$ and $u_{tt}(x_i,t^*)$ at some points $x_i$, $i=1,2,\cdots,\bar{N}_x$. In this case, the available measurements can be interpolated in order to approximate the measurements over the whole domain $\Omega=[0,L]$. The relative errors corresponding to different noise levels are shown in Table~\ref{source_measurements_interpolation}, where $\bar{N}_x= 15$ points ($\bar{N}_x = 7 \% N_x$) and spline interpolation have been used.
\begin{table}
\caption{Relative errors of $\hat{f}(x)$ versus different noise levels when only $7\%$ of the measurements are used (measurements interpolation)}
\label{source_measurements_interpolation}
\begin{center}
\begin{tabular}{cc} \hline
Noise Level &  Relative Error \\ \hline				
0\%	&	    0.2225	\%	 \\
1\%	&	    0.2144	\%	\\
3\%	&	    0.2962	\%	\\
5\%	&	    0.4411	\%	\\
10\%	&	    0.8642	\%	\\ \hline
\end{tabular}
\end{center}
\end{table}
\subsubsection{{\rm IP 2:}}
Three cases have been tested: constant velocity, space varying velocity and space-time dependent velocity.  The exact values of the velocity in the three cases are chosen as follows: $c=0.5$, $c(x)=x^2$ and $c(x,t)=(xt)^2$, respectively. Table~\ref{elsevier_c_all} shows the estimated values and the relative errors of the constant velocity coefficient with different noise levels. The results of estimating $c(x)$ and $c(x,t)$ are presented in Figure~\ref{elsevier_cX_all}, Figure~\ref{elsevier_cXT_noise5_3positions}, and Figure~\ref{elsevier_cXT_interp_noise5}. Their corresponding relative errors are shown in Table~\ref{error_cX} and Table~\ref{error_cXT}.
\par Figure~\ref{elsevier_cX_all_Hermite} and Table~\ref{elsevier_cX_all_Hermite} show the results of the space varying velocity $c(x)=x^2$ when Hermite polynomials are used as basis functions.
\begin{table}
\caption{The estimated results for constant velocity coefficient with different noise levels where the exact is $c=0.5$.}
\label{elsevier_c_all}
\begin{center}
\begin{tabular}{ccc} \hline
Noise Level &  Estimated Value & Relative Error (\%) \\ \hline
0\%	&	          0.49999583341812		&	   8.3332e-04	\%	 \\
1\%	&	         0.500195743015861		&	   3.9149e-02	\%	\\
3\%	&	         0.500590887585075		&	   1.1818e-01	\%	\\
5\%	&	         0.500979919835552		&	   1.9598e-01	\%	\\
10\%	&	         0.501926672127135		&	   3.8533e-01	\%	\\ \hline
\end{tabular}
\end{center}
\end{table}
 \begin{figure} 
\centering
\begin{tabular}{cc}
\epsfig{file=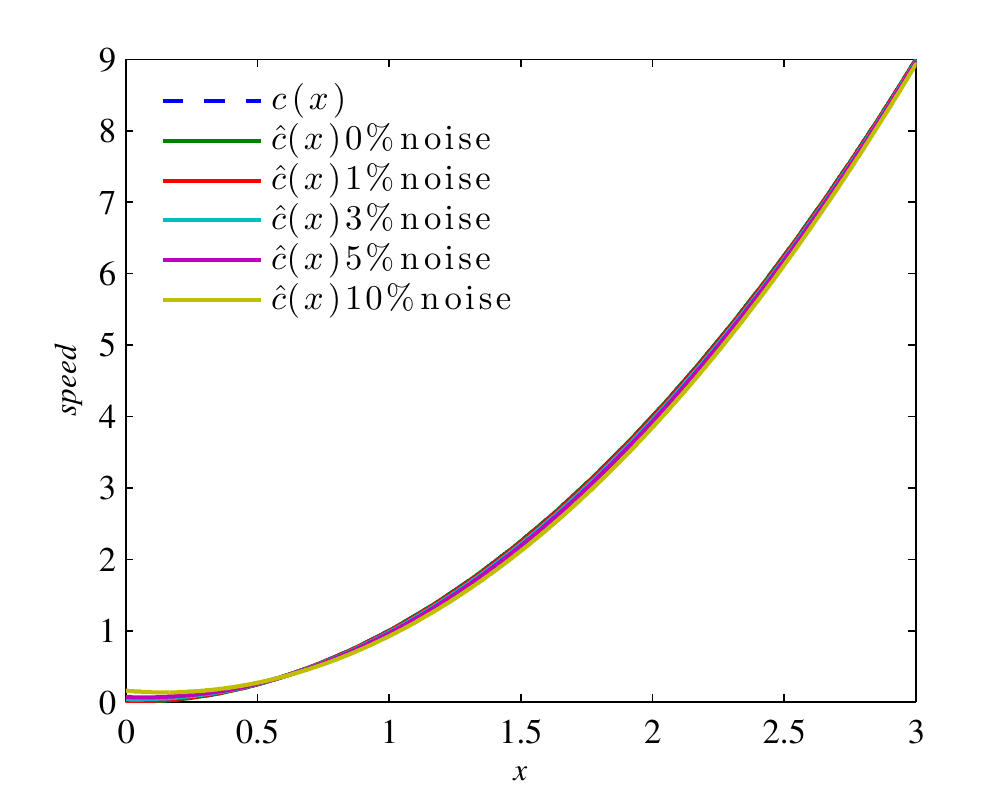,height=0.42\linewidth,width=0.5\linewidth,clip=} & \epsfig{file= 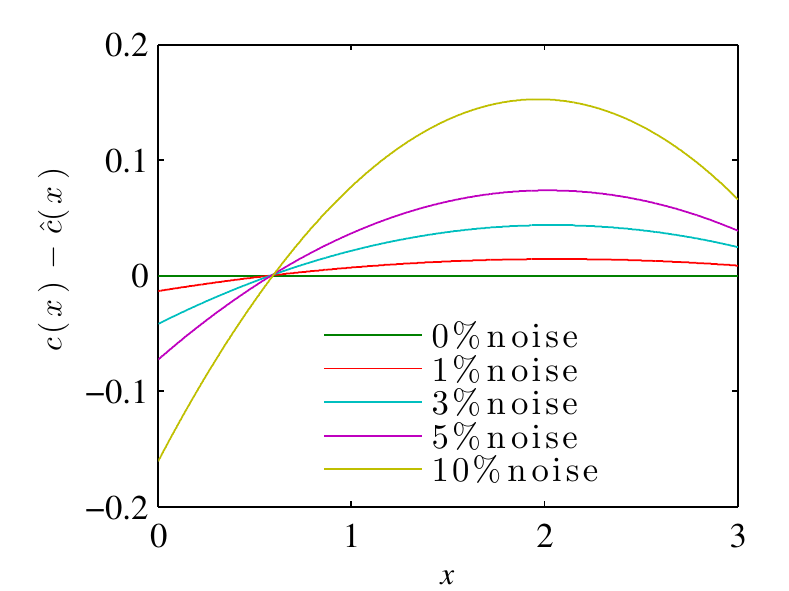, height=0.43\linewidth,width=0.5\linewidth,clip=}\\
 {\bf a} & {\bf b}
\end{tabular}
\caption{{\bf a}: Exact  space-dependent velocity $c(x)$ (blue dashed) and  estimated ones $\hat{c}(x)$ (colored solid) corresponding to different noise levels. {\bf b}:~Estimation errors for the results in {\bf a}. }
\label{elsevier_cX_all}
\end{figure}
\begin{table} 
\caption{Relative errors of $\hat{c}(x)$ versus different noise levels.}
\label{error_cX}
\begin{center}
\begin{tabular}{cc} \hline
Noise Level &  Relative Error \\ \hline
0\%	&	    0.0001	\%	 \\
1\%	&	    0.2665	\%	\\
3\%	&	    0.8082	\%	\\
5\%	&	    1.3624	\%	\\
10\%	&	    2.8064	\%	\\ \hline
\end{tabular}
\end{center}
\end{table}
 \begin{figure} 
\centering
\begin{tabular}{cc}
\epsfig{file=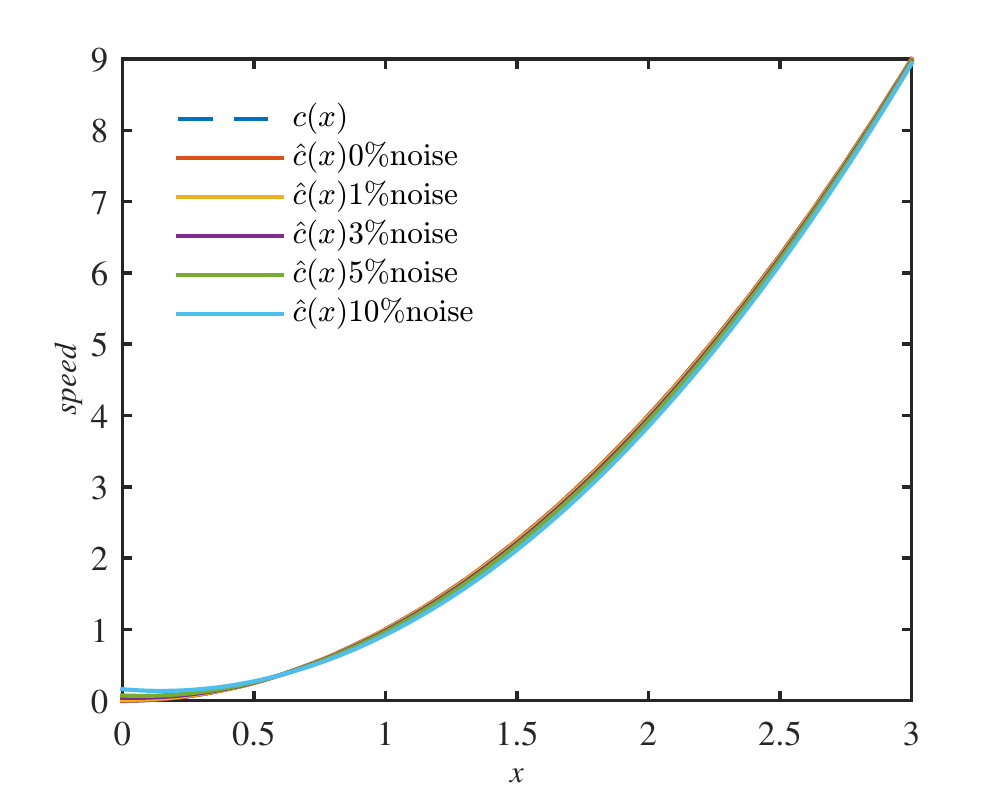,height=0.42\linewidth,width=0.5\linewidth,clip=} & \epsfig{file= 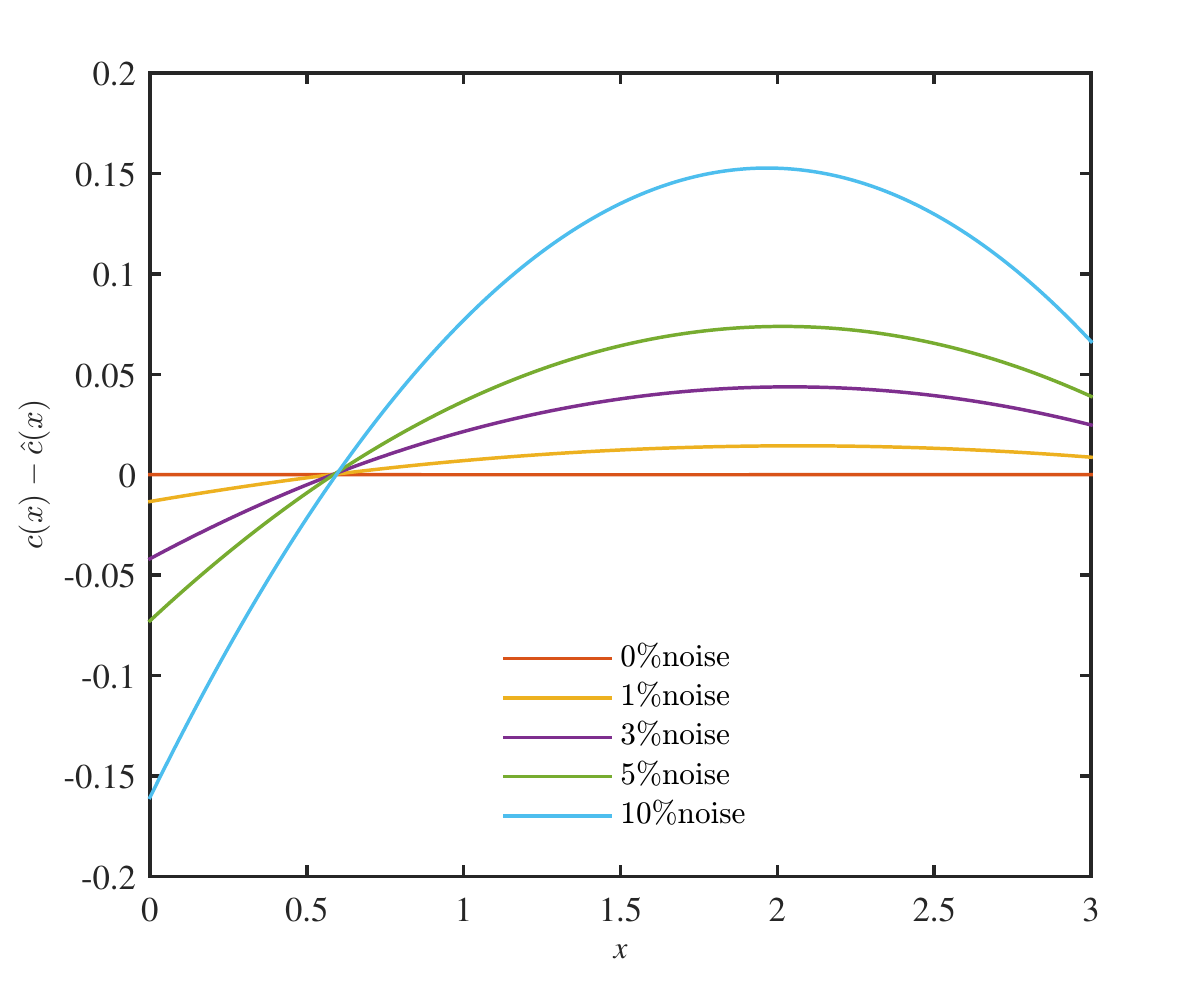, height=0.43\linewidth,width=0.5\linewidth,clip=}\\
 {\bf a} & {\bf b}
\end{tabular}
\caption{{\bf a}: Exact space-dependent velocity $c(x)$ (blue dashed) and  estimated ones $\hat{c}(x)$ (colored solid), in case of Hermite basis functions, corresponding to different noise levels. {\bf b}:~Estimation errors for the results in {\bf a}. }
\label{elsevier_cX_all_Hermite}
\end{figure}
\begin{table} 
\caption{Relative errors of $\hat{c}(x)$ versus different noise levels when Hermite basis functions have been used.}
\label{error_cX_Hermite}
\begin{center}
\begin{tabular}{cc} \hline
Noise Level &  Relative Error \\ \hline
0\%	&	0.0001	\%	\\
1\%	&	0.2665	\%	\\
3\%	&	0.8082	\%	\\
5\%	&	1.3624	\%	\\
10\%	&	2.8064	\%	\\ \hline
\end{tabular}
\end{center}
\end{table}
\begin{figure} 
\centering
\begin{tabular}{ccc}
\epsfig{file= 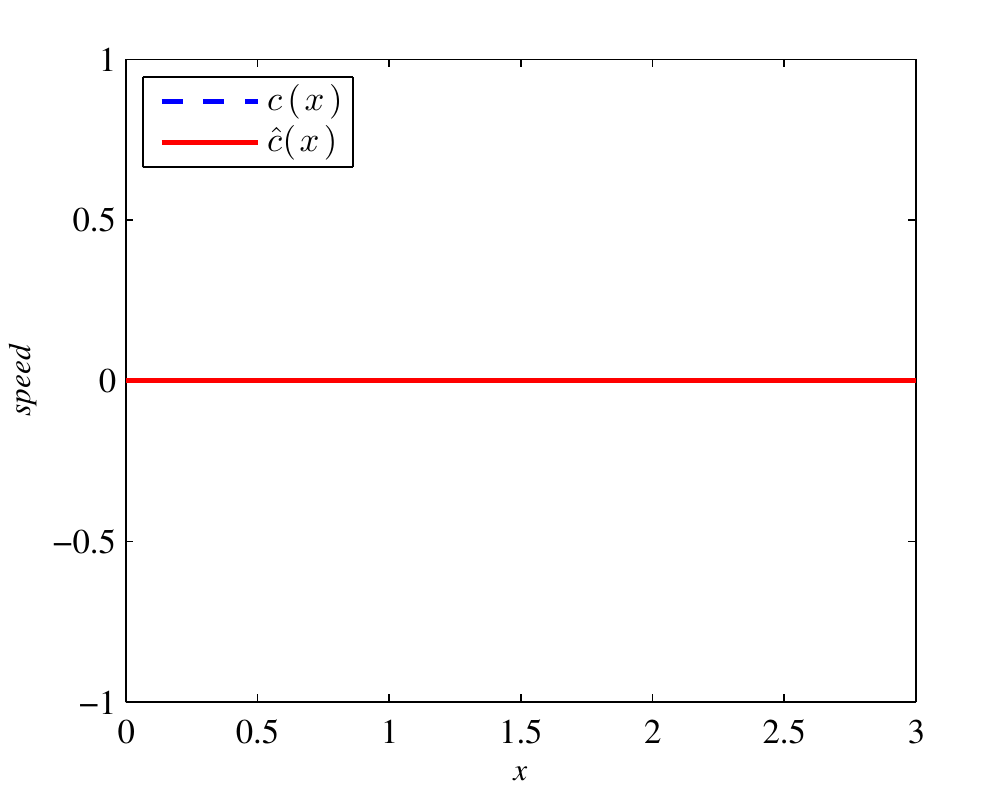,height=0.3\linewidth,width=0.32\linewidth,clip=} & \epsfig{file= 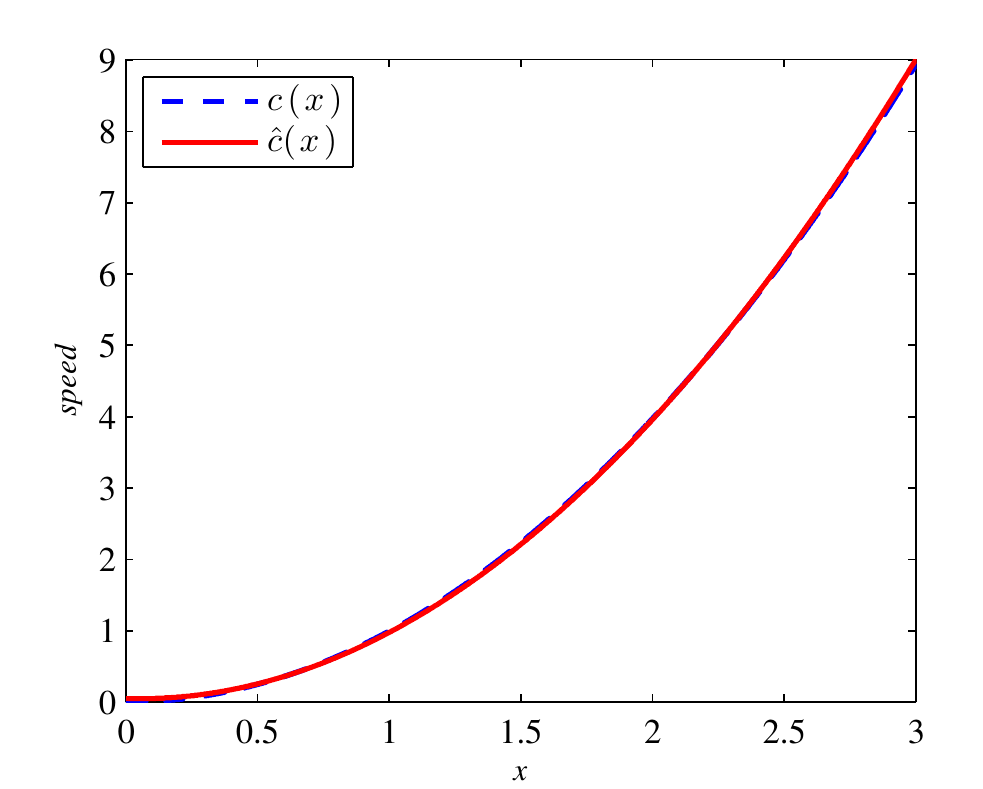,height=0.3\linewidth,width=0.32\linewidth,clip=} & \epsfig{file= 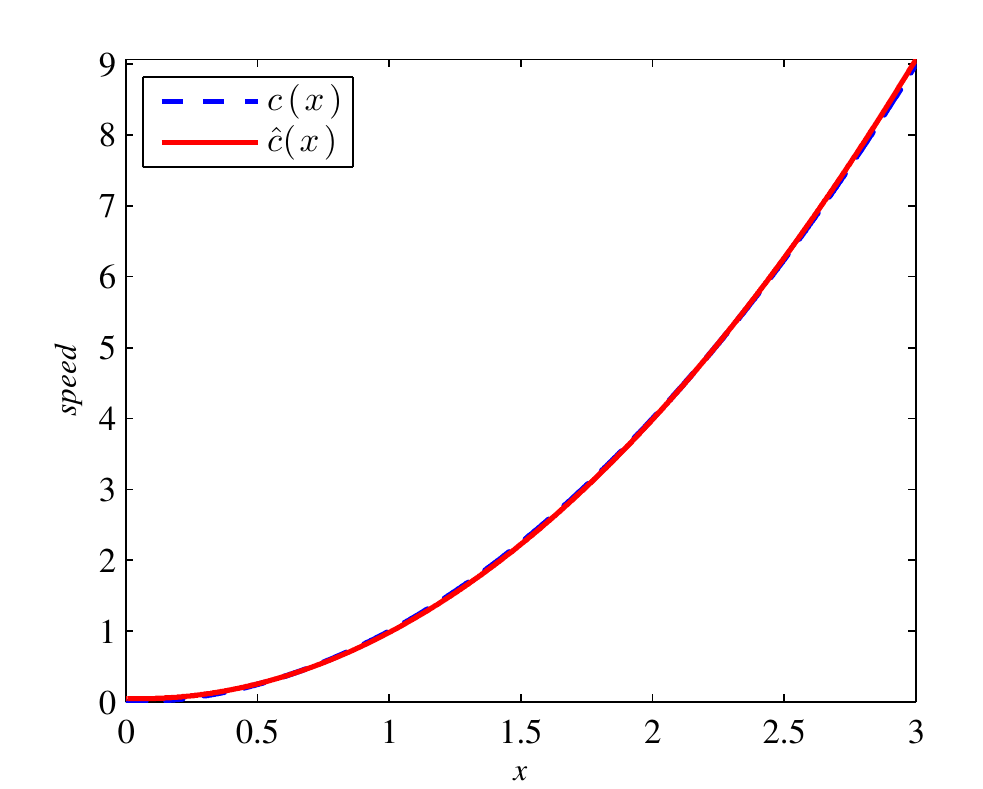,height=0.3\linewidth,width=0.32\linewidth,clip=}\\
$c(x,t^*_1)$ & $c(x,t^*_2)$ & $c(x,t^*_3)$
\end{tabular}
\caption{Exact velocity $c(x,t^*)$ (dashed blue) and estimated one $\hat{c}(x,t^*)$ (solid red) at three different fixed time.  The level of noise is $5\%$.}
\label{elsevier_cXT_noise5_3positions}
\end{figure}
\begin{figure} 
\centering
\begin{tabular}{cc}
\epsfig{file= 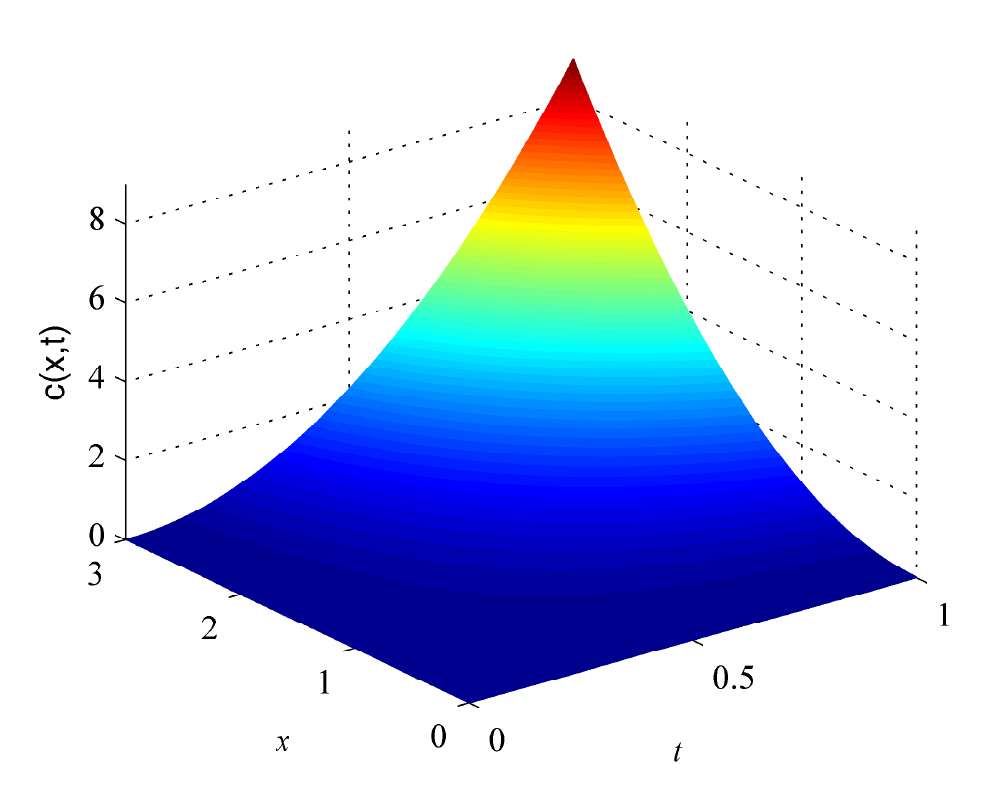,width=0.5\linewidth,clip=} & \epsfig{file= 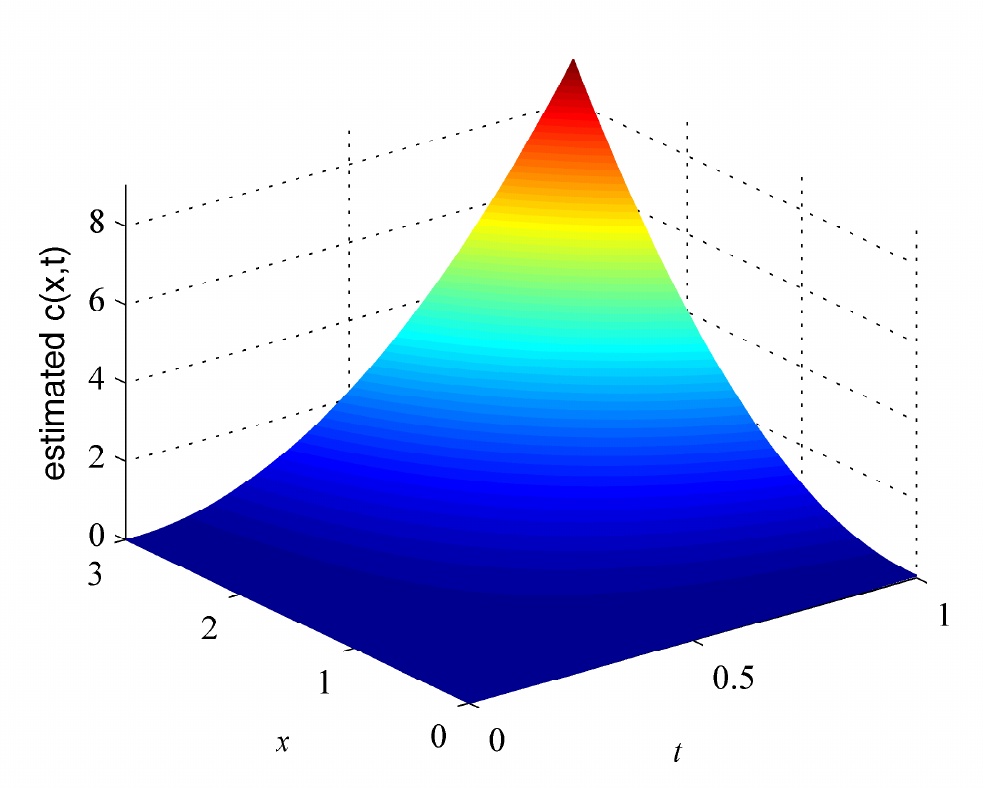,width=0.5\linewidth,clip=} 
\end{tabular}
\caption{Exact velocity $c(x,t)$ (left) and estimated one $\hat{c}(x,t)$ (right) after applying modulating functions-based algorithm and doing interpolation; noise level $= 5\%$.}
\label{elsevier_cXT_interp_noise5}
\end{figure}
\begin{table} 
\caption{Relative errors of $\hat{c}(x,t)$ versus different noise levels.}
\label{error_cXT}
\begin{center}
\begin{tabular}{cc} \hline
Noise Level &  Relative Error \\ \hline
0\%	&	0.0001\%	 \\
1\%	&	0.1983\%	\\
3\%	&	0.5989	\%	\\
5\%	&	1.0052	\%	\\
10\%&	2.0477	\%	\\ \hline
\end{tabular}
\end{center}
\end{table}
\subsubsection{{\rm IP3:}}
For the joint estimation, we set $f(x)=c(x)=x$. The estimated source and velocity in this joint estimation are shown in Figure~\ref{elsevier_joint_f_all} and Figure~\ref{elsevier_joint_cX_all}, respectively. The corresponding relative errors are presented in Table~\ref{error_joint}.
 \begin{figure} 
\centering
\begin{tabular}{cc}
 \epsfig{file=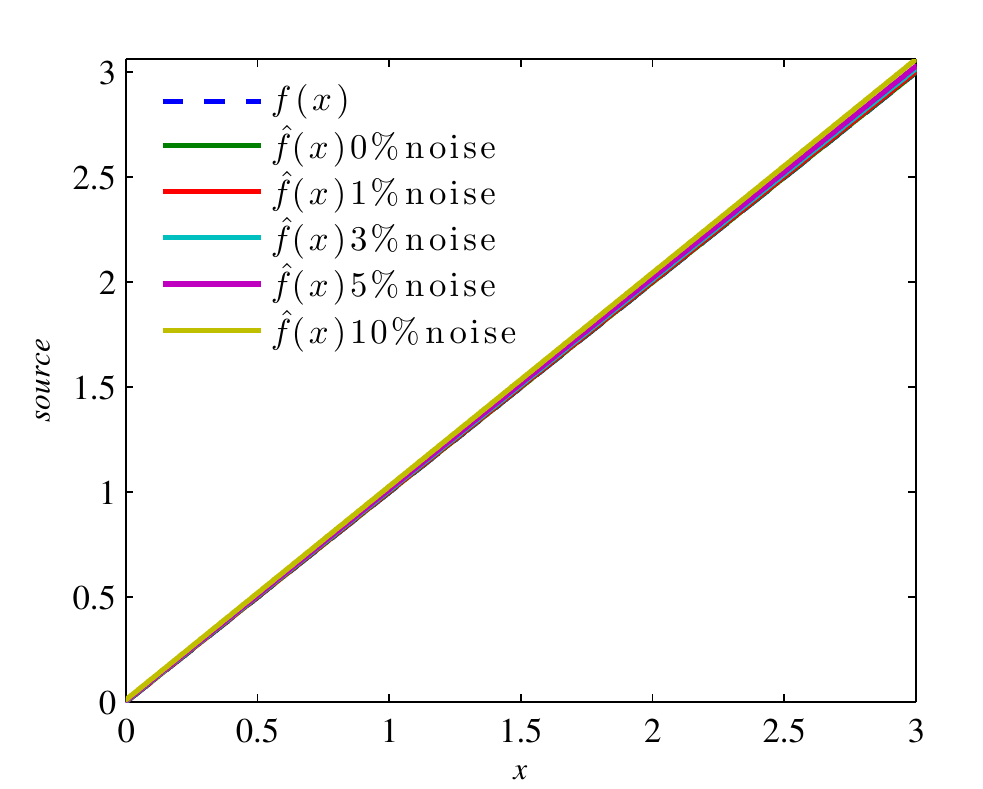,height=0.42\linewidth,width=0.5\linewidth,clip=} & \epsfig{file=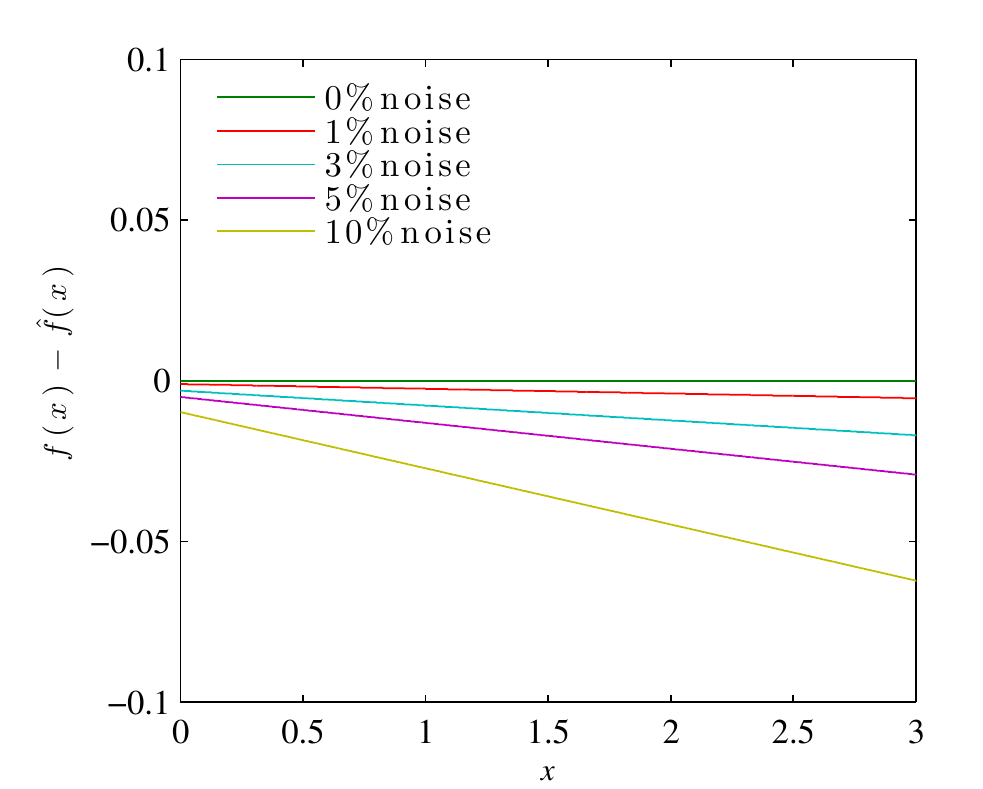,height=0.42\linewidth,width=0.52\linewidth,clip=} \\
 {\bf a} & {\bf b}
\end{tabular}
\caption{ {\bf a}: Exact source $f(x)$ (blue dashed) and  estimated ones in the joint estimation  $\hat{f}(x)$ (colored solid) corresponding to different noise levels. {\bf b}: Estimation errors for the results in {\bf a}.}
\label{elsevier_joint_f_all}
\end{figure}
  \begin{figure} 
\centering
\begin{tabular}{cc}
 \epsfig{file=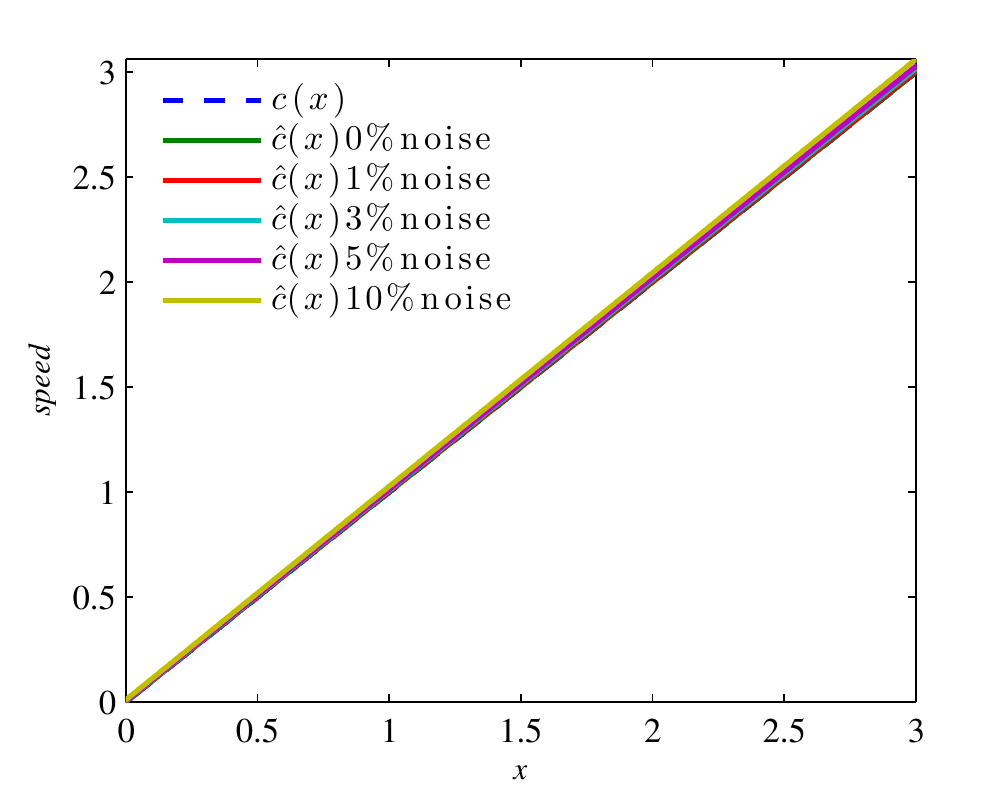,height=0.42\linewidth,width=0.5\linewidth,clip=} & \epsfig{file= 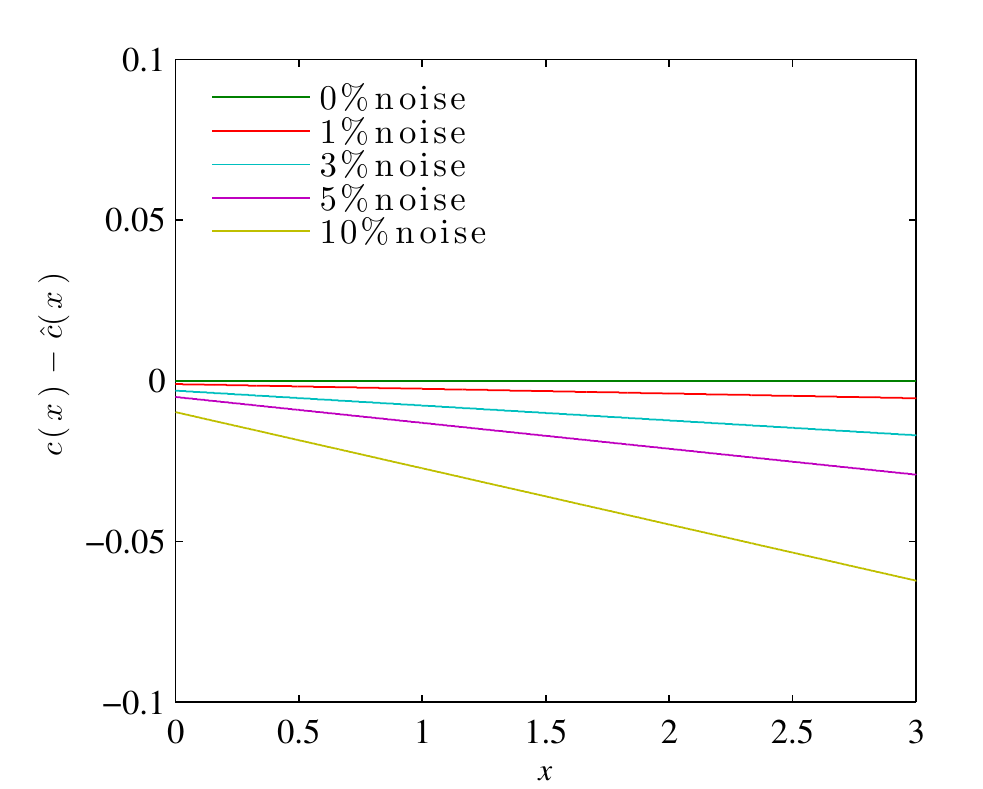,height=0.42\linewidth,width=0.52\linewidth,clip=}\\
 {\bf a} & {\bf b}
\end{tabular}
\caption{ {\bf a}: Exact  velocity $c(x)$ (blue dashed) and the estimated ones in the joint estimation $\hat{c}(x)$ (colored solid) corresponding to different noise levels. {\bf b}: Estimation errors for the results in {\bf a}.}
\label{elsevier_joint_cX_all}
\end{figure}
\begin{table} 
\caption{Relative errors of estimating $f(x)$ and $c(x)$ in the joint estimation versus different noise levels.}
\label{error_joint}
\begin{center}
\begin{tabular}{ccc } \hline
Noise Level &  Relative Error ($f$) & Relative Error ($c$) \\ \hline
0\%	&	    1.239e-05	\%	&	   5.4428e-05	\%	 \\
1\%	&	      0.20192	\%	&	      0.23044	\%	\\
3\%	&	      0.62443	\%	&	      0.68809	\%	\\
5\%	&	       1.0689	\%	&	       1.1411	\%	\\
10\%	&	       2.2539	\%	&	       2.2486	\%	\\ \hline
\end{tabular}
\end{center}
\end{table}
\par The presented figures and tables show that the estimated unknown is in quite good agreement with the exact one; therefore, the modulating functions-based method is an efficient and a robust method for solving parameters and source estimation for linear PDE. The method can be easily extended  to identify parameters of nonlinear PDE as shown in the next section.
\section{ Parameter estimation for  the 5th order KdV Equation}\label{sec_example2} 
\subsection{Method}
The 5th order nonlinear KdV equation has the following form:
\begin{equation}\label{kaw}
u_t(x,t) +\alpha_1 u(x,t) u_x(x,t) + \alpha_2 u_{xxx}(x,t) - \alpha_3 u_{xxxxx}(x,t)= 0,
\end{equation}
where $\alpha_1$, $\alpha_2$, and $\alpha_3$ are positive parameters. This equation is used to model different phenomena  such as pcapillaryÐgravity water waves, chains of coupled oscillators, and magneto-acoustic waves in plasma \cite{Be:77,GoOs:81,BrDe:02}. The parameters; $\alpha_1$, $\alpha_2$, and $\alpha_3$; are related to properties of the physical medium under consideration. Thus, estimating these parameters can be used to validate the applicability of this equation for particular media \cite{JaSe:14}. Hence, the following inverse problem can be defined:\\
\par  Given $u(x,t^*)$ and $u_t(x,t^*)$ at a fixed time $t^*$, find $\alpha_1$, $\alpha_2$, and $\alpha_3$.\\
\par The following proposition gives a solution to this inverse problem using modulating functions-based method.
 \begin{proposition}\label{IP_kaw}
Let $\alpha_1$, $\alpha_2$, and $\alpha_3$ be unknown parameters in (\ref{kaw}), and let $\{\phi_m(x)\}_{m=1}^{m=M}$ be a class of at least fifth order modulating functions  with $M \ge 3$. Then, the unknown parameters can be estimated by solving the following linear system: 
\begin{equation}\label{system_kaw}
 {\bf \mathcal{A}} { \Gamma} = {K},
\end{equation}
where the rows of the $M \times 3$ matrix $\mathcal{A}$, the elements of the vector $K$, and the vector $\Gamma$ are
\begin{equation}\label{A_kaw}
\mathcal{A}_{m} =
\left[
\begin{array}{c}
 -\frac{1}{2} \displaystyle{\int_0^L} u^2(x,t^*) \phi_m^{'}(x) \, \mathrm{d}x\\
  -\displaystyle{\int_0^L} u(x,t^*) \phi_m^{'''}(x) \, \mathrm{d}x \\
   \displaystyle{\int_0^L}u(x,t^*) \phi_m^{'''''}(x)  \, \mathrm{d}x
\end{array}
\right]^{tr},
\end{equation} 
\begin{equation}\label{K_kaw}
 K_m=  - \int_0^L  u_{t}(x,t^*) \phi_m(x) \, \mathrm{d}x,
\end{equation}
and
\begin{equation}\label{Gamma_kaw}
{ \Gamma}=
\left[
\begin{array}{ccc}
\alpha_1 &\alpha_2&\alpha_3
\end{array}
\right]^{tr},
\end{equation}
respectively. 
\end{proposition}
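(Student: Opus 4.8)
The plan is to carry out, for the concrete equation~(\ref{kaw}), the three-step procedure of Subsection~\ref{sec_method}. First I would freeze the time variable at $t^{*}$, multiply~(\ref{kaw}) by an arbitrary modulating function $\phi_m(x)$, and integrate over the spatial interval, obtaining
\begin{equation*}
\int_0^L u_t(x,t^{*})\phi_m(x)\,\mathrm{d}x + \alpha_1\!\int_0^L u\,u_x\,\phi_m(x)\,\mathrm{d}x + \alpha_2\!\int_0^L u_{xxx}\phi_m(x)\,\mathrm{d}x - \alpha_3\!\int_0^L u_{xxxxx}\phi_m(x)\,\mathrm{d}x = 0 .
\end{equation*}
The one extra ingredient needed beyond the linear case is the observation that the nonlinear advection term is itself a spatial derivative, $u\,u_x = \tfrac12\,\partial_x\!\big(u^{2}\big)$, so it too can be moved onto $\phi_m$ by a single integration by parts.

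Next I would integrate by parts: once on $\tfrac12\,\partial_x(u^{2})\,\phi_m$, three times on $u_{xxx}\phi_m$, and five times on $u_{xxxxx}\phi_m$, each step transferring one $x$-derivative from the (measured) state onto the (analytically known) $\phi_m$. Every such step generates a boundary contribution at $x=0$ and $x=L$ which is a product of a spatial derivative of $u$ of order $\le 4$ with a derivative of $\phi_m$ of order $\le 4$; since $u$ and its spatial derivatives up to order four are bounded at the endpoints and $\phi_m$ is a modulating function of order at least five, the vanishing-endpoint properties~(\ref{prop_modu}) make all of these boundary terms disappear. Keeping track of the signs produced by the repeated integrations by parts, what survives is, for each $m$,
\begin{equation*}
\int_0^L u_t\,\phi_m\,\mathrm{d}x - \tfrac12\,\alpha_1\!\int_0^L u^{2}\,\phi_m'\,\mathrm{d}x - \alpha_2\!\int_0^L u\,\phi_m'''\,\mathrm{d}x + \alpha_3\!\int_0^L u\,\phi_m'''''\,\mathrm{d}x = 0 ,
\end{equation*}
and moving the $u_t$-integral to the right-hand side gives precisely the scalar relation $\mathcal{A}_m\Gamma = K_m$ with $\mathcal{A}_m$, $K_m$, $\Gamma$ as in (\ref{A_kaw})--(\ref{Gamma_kaw}) (the right-hand side collapses to just the $u_t$ term because~(\ref{kaw}) is homogeneous).

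Finally I would stack these identities over $m=1,\dots,M$ to obtain the $M\times 3$ system~(\ref{system_kaw}); since the only unknowns are the three scalars $\alpha_1,\alpha_2,\alpha_3$ and $M\ge 3$, the system is (over-)determined, and by the full-column-rank property recorded in Remark~\ref{rem_1} it has a unique solution (in the least-squares sense, $\Gamma=(\mathcal{A}^{tr}\mathcal{A})^{-1}\mathcal{A}^{tr}K$). The one place that calls for genuine care is the cancellation of the boundary terms arising from the five integrations by parts of the quintic-derivative term: this is exactly where the hypothesis that $\phi_m$ be of order at least five is essential, in tandem with the standing assumption that $u$ and its first four spatial derivatives stay bounded at $0$ and $L$. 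The remainder is routine sign-bookkeeping in the integration by parts.
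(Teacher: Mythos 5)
Your argument is correct and follows essentially the same route as the paper's own proof in Appendix~\ref{app_ IP_kaw}: fix $t^*$, multiply by $\phi_m$, integrate over $[0,L]$, rewrite $u\,u_x=\tfrac12\partial_x(u^2)$, integrate by parts once, three times, and five times respectively so that the boundary terms vanish by (\ref{prop_modu}), and stack the resulting identities into (\ref{system_kaw}). The only difference is that you make explicit the bookkeeping of the boundary contributions and the role of Remark~\ref{rem_1}, which the paper leaves implicit.
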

\begin{proof}
Appendix~\ref{app_ IP_kaw} provides details of the proof.
\end{proof}
\subsection{Numerical Simulations}\label{sec_numKDV}
For the numerical simulations, let $\alpha_1=\alpha_2=\alpha_3=1$, then (\ref{kaw}) is a Kawahara equation. Kawahara equation with  an initial condition $u(x,0)~= \frac{105}{169} \sech^4 \left[ \frac{1}{2\sqrt13} x \right]$ has the following exact solution \cite{Ka:03}:
\begin{equation}
u(x,t) = \frac{105}{169} \sech^4  \left[ \frac{1}{2\sqrt13} (x-\frac{36t}{169}) \right].
\end{equation}
Figure~\ref{parameters_Kawahara} and Table~\ref{kaw_table} exhibit the estimated parameters values and the relative errors, respectively; where $L=60$, $T=50$, $N_x=N_t=601$, and the degree of freedom $q$ in (\ref{poly_modu}) is chosen such that $\phi_m(x)$ is at least of order five. These results show that the identification of the parameters is successful in noise-free case. In the noisy case, the results are good for the lower-order coefficients, $\alpha_1,\alpha_2$, but less accurate for the coefficient of the higher order, $\alpha_3$, especially with a high level of noise as $10\%$. This result can be improved by adapting  the number of modulating function $M$ for this case. For example, if $M=8$, the relative error of estimating  $\alpha_3$ with $10\%$ of noise decreases to  $3.3897\%$.
 \begin{figure}
\centering
\epsfig{file= 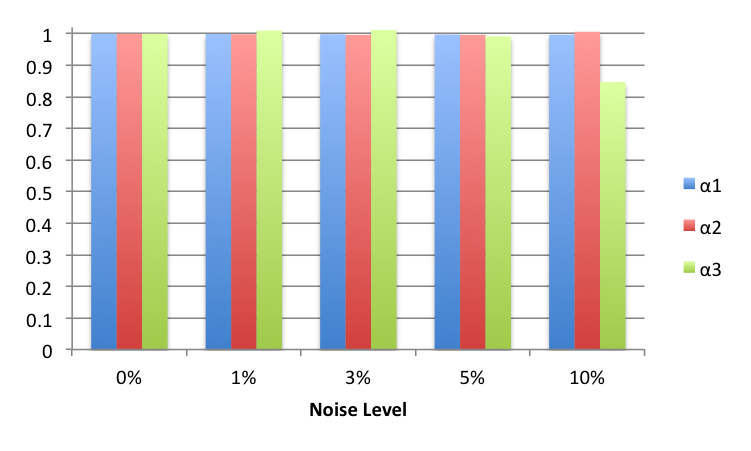,width=0.7\linewidth,clip=} 
\caption{Blue, red, and green bars represent the estimated  values of the  parameters  $\alpha_1, \alpha_2,$ and $\alpha_3$ in Kawarah equation, respectively. The exact values are  $\alpha_1=\alpha_2=\alpha_3=1$ and the estimation is done w.r.t different noise levels; $q=8$ and $M=9$.}
\label{parameters_Kawahara}
\end{figure}
\begin{table}
\caption{The relative errors of estimating the  parameters  $\alpha_1, \alpha_2,$ and $\alpha_3$ in Kawarah equation w.r.t different noise levels.}
\label{kaw_table}
\begin{center}
\begin{tabular}{cccc} \hline
Noise Level &  $\dfrac{|\alpha_1-\hat{\alpha}_1|}{|\alpha_1|}\times 100$ & $\dfrac{|\alpha_2-\hat{\alpha}_2|}{|\alpha_2|} \times 100$& $\dfrac{|\alpha_3-\hat{\alpha}_3|}{|\alpha_3|}\times 100$ \\ \hline
0\%	&	   2.8866e{-13}	\%	&	   4.2188e{-13}	\%	&	   2.377e{-11}	\%	 \\
1\%	&	     0.068971	\%	&	      0.18571	\%	&	      0.92843	\%	\\
3\%	&	      0.18305	\%	&	      0.39435	\%	&	       1.1323	\%	\\
5\%	&	      0.26548	\%	&	      0.38516	\%	&	      0.85491	\%	\\
10\%	&	      0.33414	\%	&	      0.59928	\%	&	       15.323	\%	\\ \hline
\end{tabular}
\end{center}
\end{table}
\par For constant unknowns, as in this example, it can be enough to have the measurements at a suitable subdomain $\omega=[0,L^*] \subset \Omega$. Estimating the three parameters in Kawahara equation with different values for $L^*$, $L^* \le L$, is shown in Figure~\ref{Kawahara_RError_Vs_L}. From this figure, we observe that in the noise-free case, the error is small even when the data is available only in the first third of the whole interval. In the noisy case, this subdomain should be increased to have an acceptable estimation error. In both cases, the error generally decreases as $L^*$ increases.
\begin{figure}
\centering
\begin{tabular}{cc}
\epsfig{file= 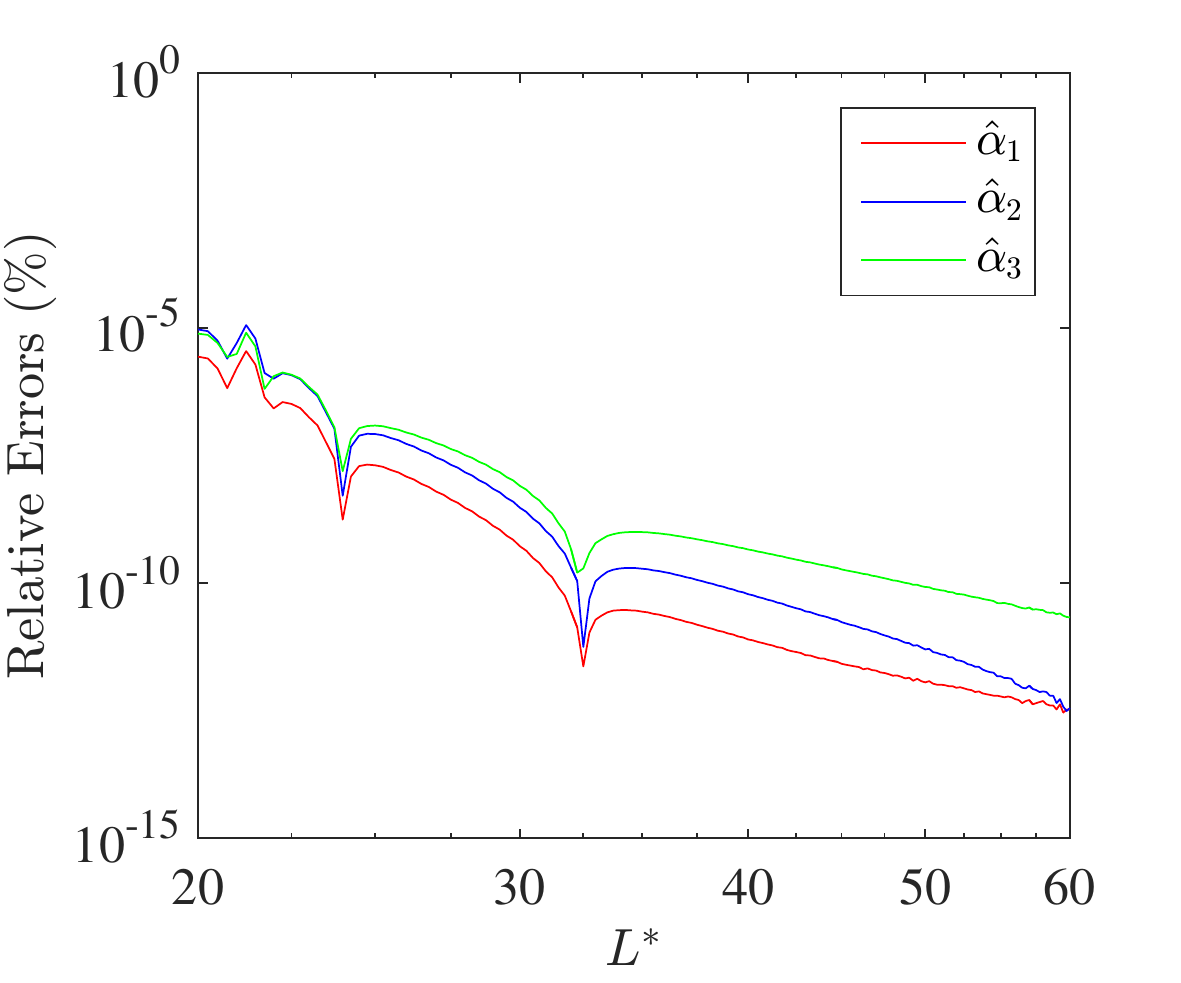,height=6cm,width=7cm,clip=} &
\epsfig{file= 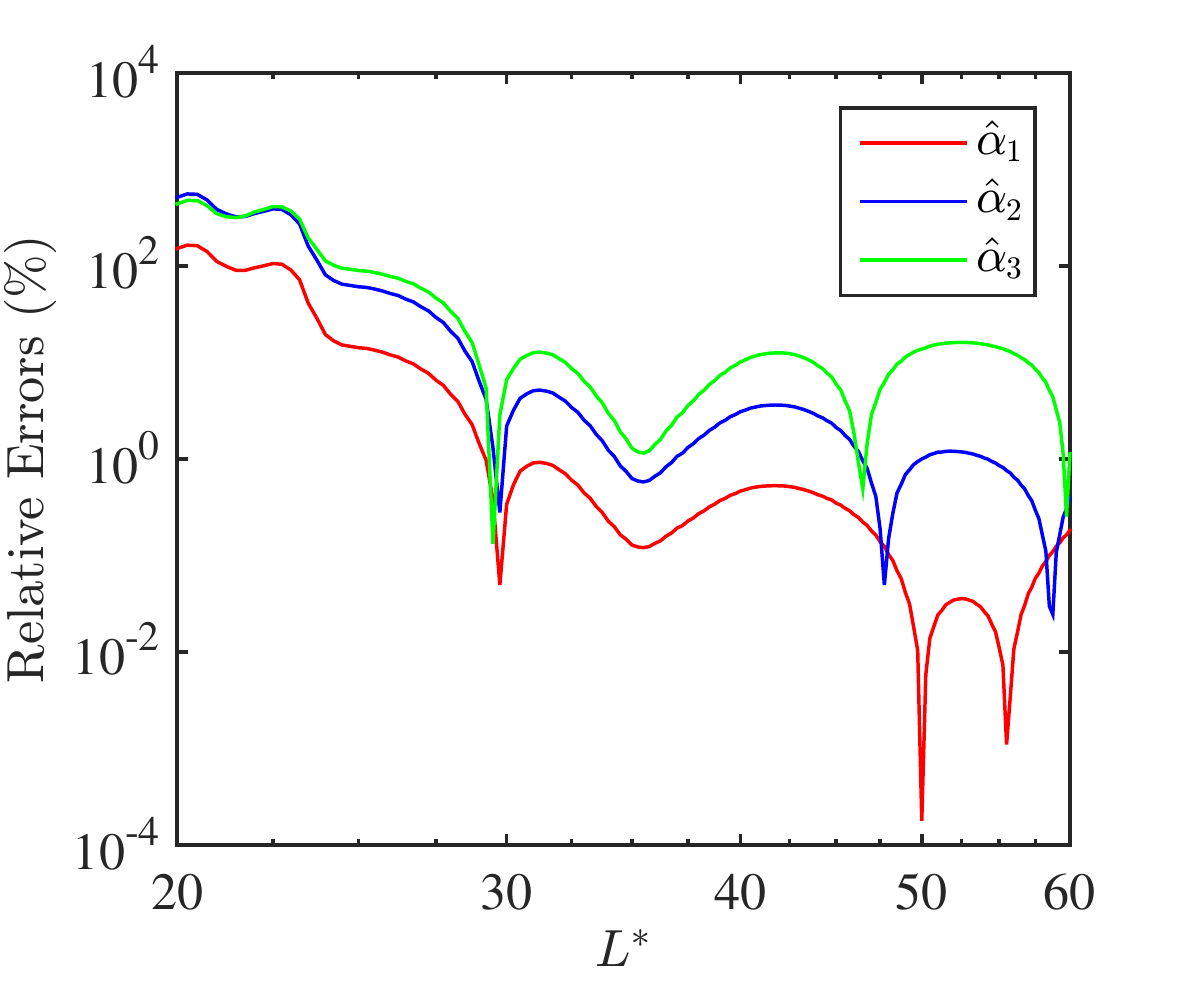,height=6cm,width=7cm,clip=}\\
 {\bf a} & {\bf b}
\end{tabular}
\caption{Relative Errors (in $\%$ and  log-log scale) for estimating $\alpha_1, \alpha_2,$ and $\alpha_3$ simultaneously in Kawarah equation w.r.t different $L^*$. {\bf a}: in noise-free case and {\bf b} with $3\%$ of noise on the measurements.}
\label{Kawahara_RError_Vs_L}
\end{figure}
\par Similar to the procedure developed for the wave equation example, one can estimate space-time dependent coefficients in the 5th order KdV equation. In addition, modulating functions-based method can be applied to estimate parameters in other high order nonlinear PDEs such as the sixth order Boussinesq equation and take advantage of  the properties of modulating functions to transfer the spatial derivatives to the modulating functions which can be computed analytically.
\section{Discussion}\label{sec_disc} 
The theoretical part in this paper confirms the efficiency of the modulating-functions based method and its simple implementation. In addition, the obtained  results  have shown the good performance of this method and its success even with  high levels of noise.
\par The number of  modulating functions, $M$, plays an important role in the performance of the method; (see Figure~\ref{optimal_M}). Figure~\ref{optimal_M} exhibits the number of modulating functions versus the relative error  for IP1, IP2, and IP3. Interestingly, it shows that the accuracy of the estimation can be improved by increasing $M$, especially in the noisy case.  In addition,  it shows that there exist  a unique optimal number of modulating functions, $M^*$, in the studied examples. However, the estimation is generally good for a relatively  large interval for $M$. Also, it proves that this optimal number depends on the considered problem. From this observation, it is worthy to know under  a specific  parameter identification problem and after choosing the type of modulating functions how the relative error, as a function of $M$, is affected by the noise level and the nature of the unknown functions. Figure~\ref{indp_table_v2} illustrates this behavior. The relative error, with respect to the number of modulating functions, is invariant with respect to the noise level and  the type of the unknown. This latter result offers a way to select an optimal number of modulating functions for real applications. In other words, one can set a synthetic function for the  unknown, apply the method and compute the error for different numbers of modulating functions to find the optimal one, $M^*$, and finally this $M^*$ might be a good guess for the optimal for the real inverse problem.
\begin{figure}[!t]
\centering
\begin{tabular}{cc}
{\bf a} & {\bf b} \\
 \epsfig{file= 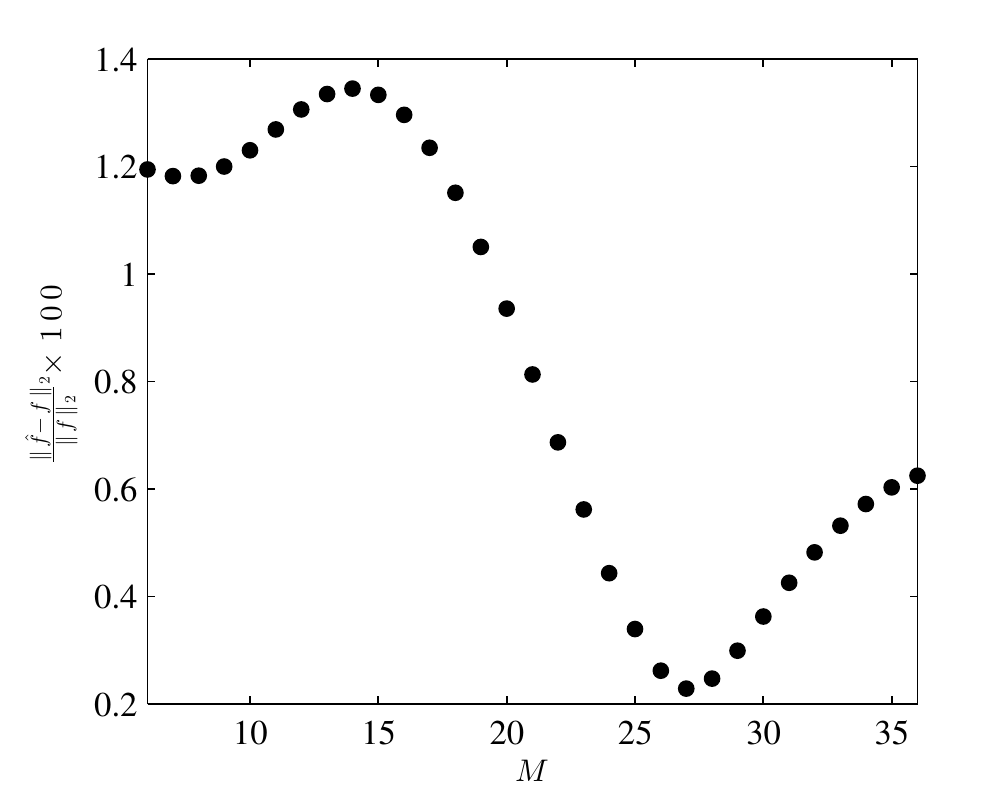,height=0.3\linewidth,width=0.45\linewidth,clip=} & \epsfig{file=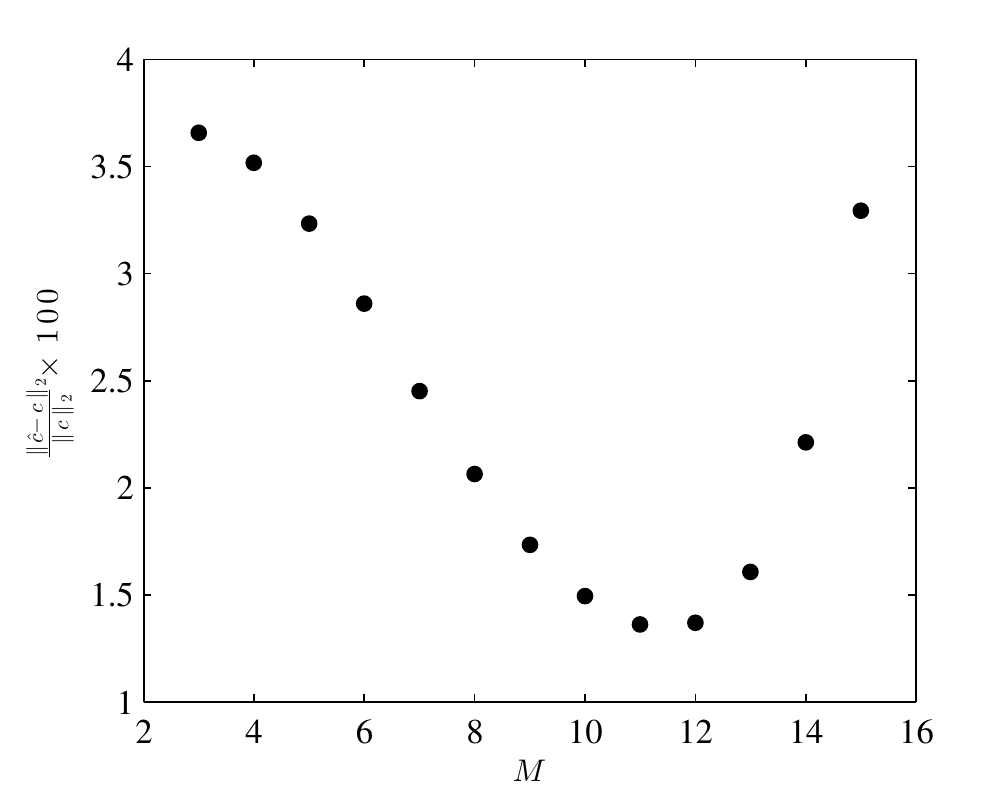,height=0.3\linewidth,width=0.45\linewidth,clip=} \\
{\bf  c} & {\bf d} \\
 \epsfig{file=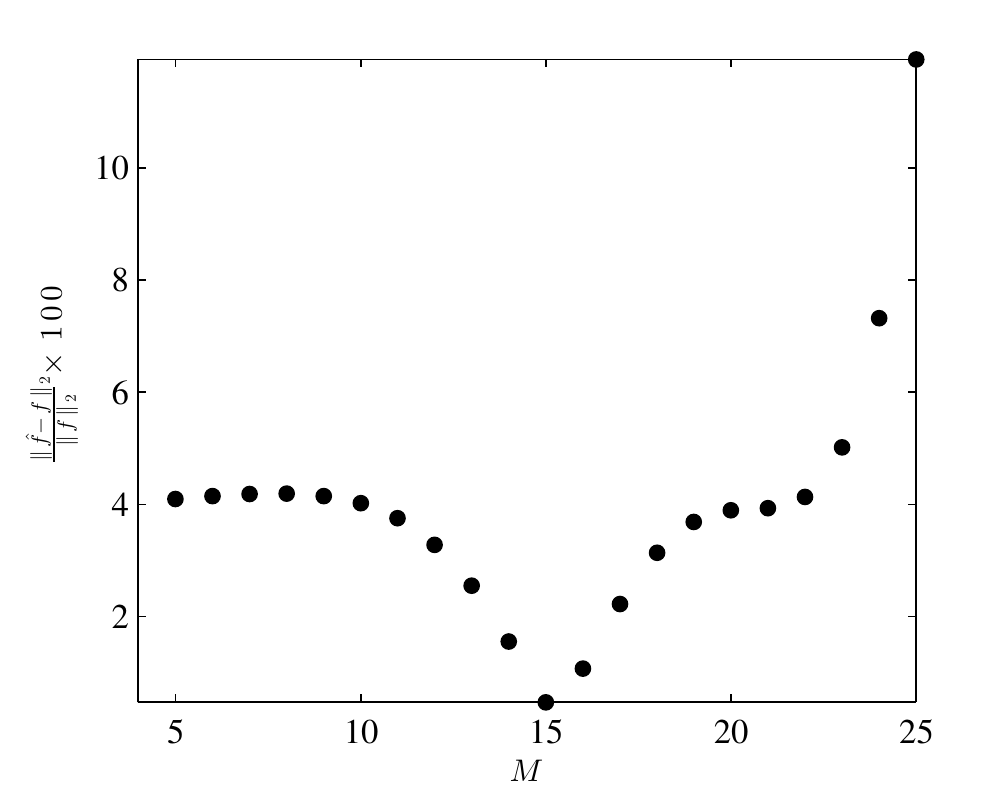,height=0.3\linewidth,width=0.45\linewidth,clip=}& \epsfig{file=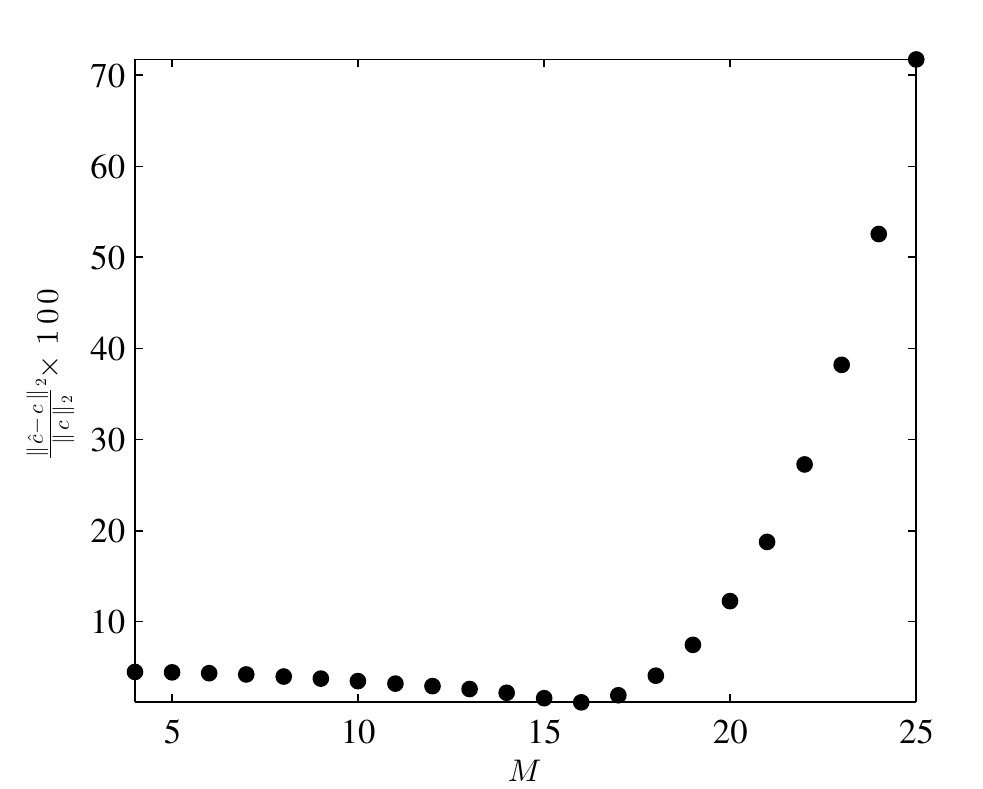,height=0.3\linewidth,width=0.45\linewidth,clip=} 
\end{tabular}
\caption{Number of modulating functions versus the relative errors for {\bf a}: IP1, {\bf b}: IP2, {\bf c}: IP3 source estimation, and  {\bf d}: IP3 velocity estimation. The optimal number of modulating functions, $M^*$, is $27$, $11$, $17$,  and $16$  in  {\bf a},  {\bf b},  {\bf c}, and  {\bf d}, respectively. In these figures, $5\%$ of the noise was added to the measurements.}
\label{optimal_M}
\end{figure}
\begin{figure}[!t]
\begin{center}
\begin{tabular}{p{0.8cm}cc} \hline
$f(x,t^*)$  & Polynomial functions & Sinusoidal functions \cite{Sh:57} \\ \hline
\rotatebox{90}{$\qquad \qquad \sin(x) {t^*}^2$} &
\epsfig{file= 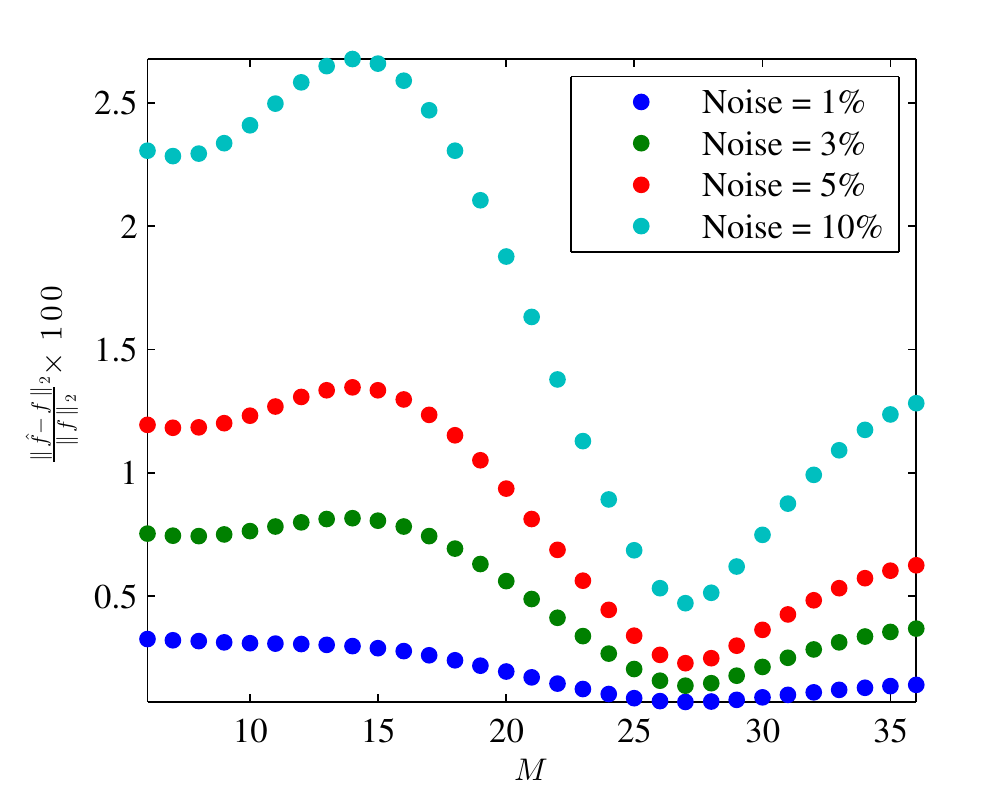,width=0.45\linewidth,clip=} & \epsfig{file= 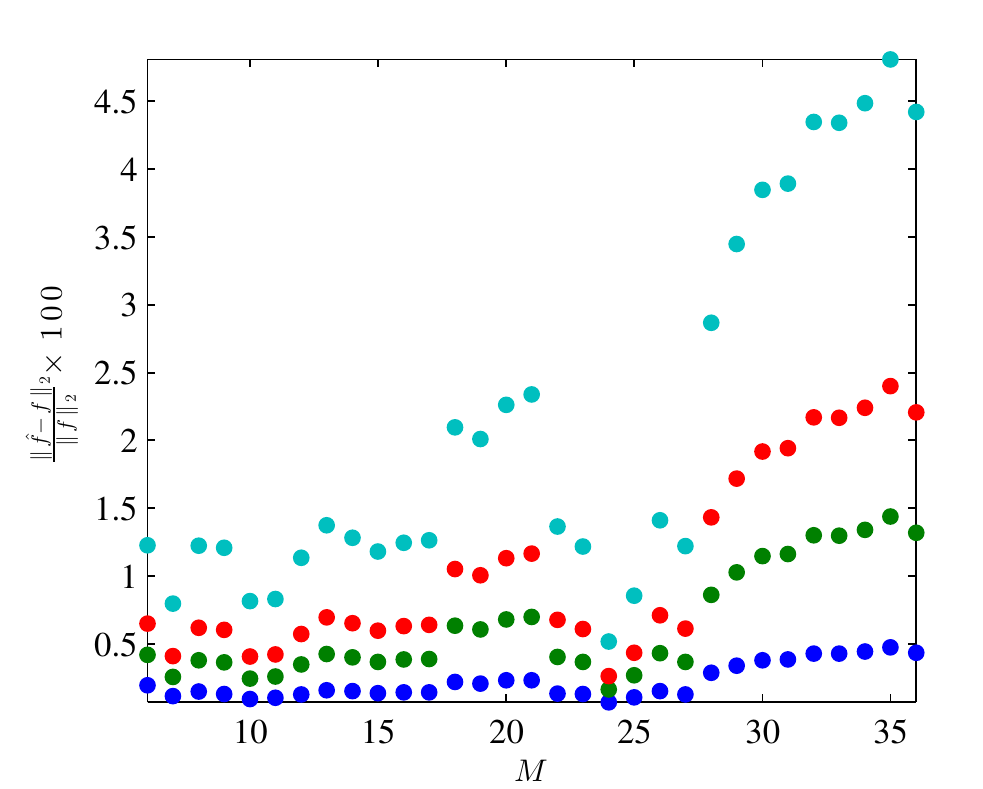,width=0.45\linewidth,clip=} \\
\rotatebox{90}{$\qquad \qquad \qquad  x^2 {t^*}^2$ }&
\epsfig{file= 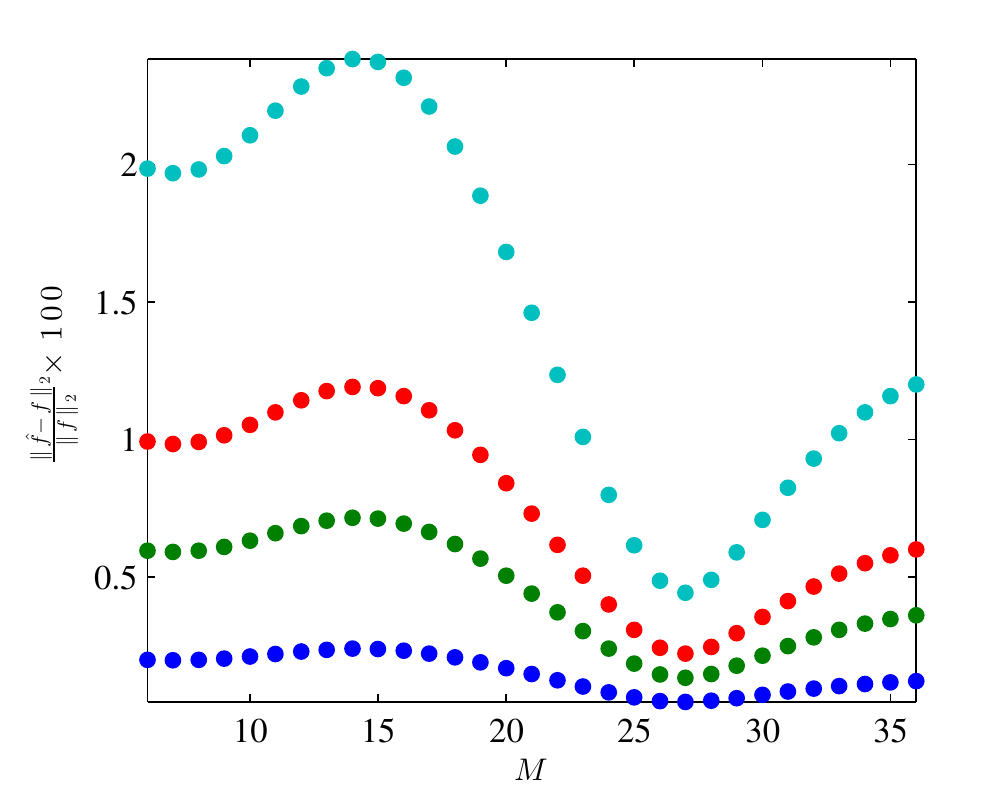,width=0.45\linewidth,clip=} & \epsfig{file= 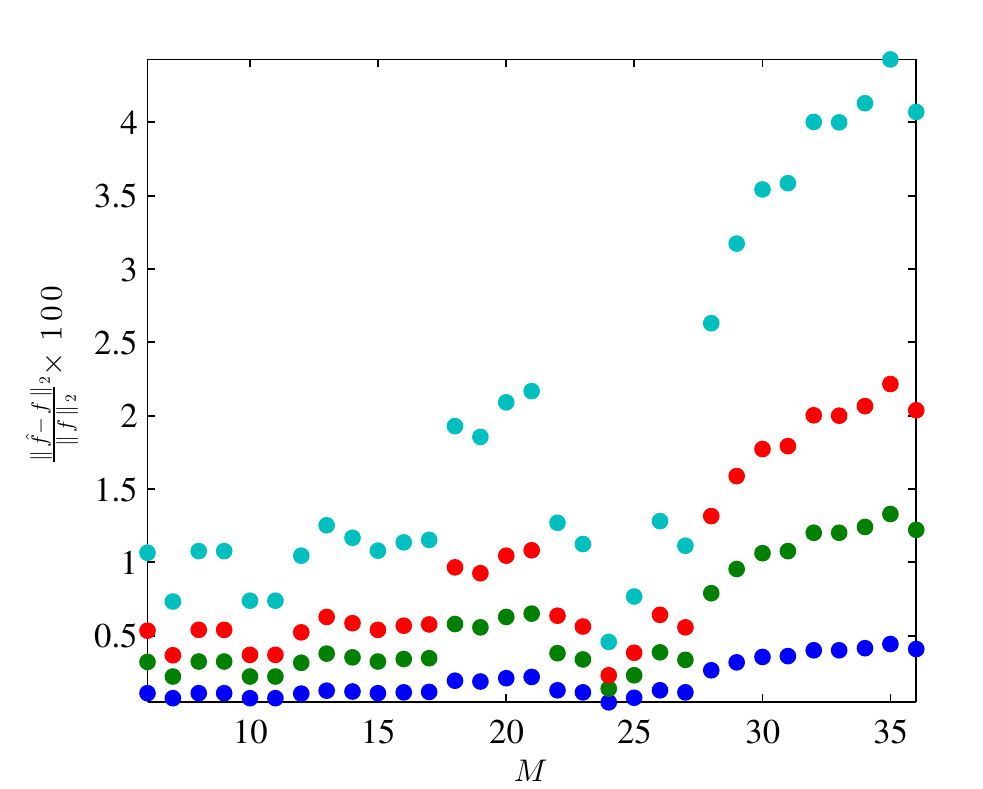,width=0.45\linewidth,clip=} \\ \hline
\end{tabular}
\end{center}
\caption{Number of modulating functions versus the relative error in {\bf IP1} w.r.t different  noise levels and unknown source. Two types  of modulating functions have been applied: polynomial modulating functions and sinusoidal modulating functions.}
\label{indp_table_v2}
\end{figure}
\par The choice of  an appropriate number of basis functions used to expand the space or time varying unknown functions is also important. Choosing this number large may lead to ill-conditioning issues. Moreover, if this number is significantly smaller than the appropriate one, we lose accuracy. Hence, this number must be selected  such that the numerical stability and accuracy are relatively good .
\par The approach can be also applied to the case of  measurements that are available at fixed points in the space instead of fixed time instants. However,  time-dependent modulating functions, $\phi(t)$, must be used in this case. 
\par In a forthcoming study, we will try to reduce the number of measurements and discuss the effect of this reduction on the three inverse problems considered in this paper.
\section{Conclusion} \label{sec_conc}
 In this paper, modulating functions-based method for solving inverse problems for 1D-PDEs has been proposed. The well-posedness of modulating functions-based solution have been studied. As illustrative examples, the method has been applied on the wave equation (linear 1D-PDE) and on the fifth order KdV (nonlinear 1D-PDE) to estimate different unknowns. By applying modulating functions-based method, the problem has been converted to a system of  algebraic equations which  is linear in the unknowns. Then these unknowns have been estimated using least square algorithms. Numerical simulations in both noise-free and noise-corrupted cases have shown good performance and robustness of this method.  The noise error contribution has been also studied and an upper bound has been derived and illustrated numerically.
 \par Future study will investigate the choice of the number of basis in order to propose an efficient and systematic method for selecting this number. In addition, extending modulating functions-based method to the estimation of  discontinuous space-time dependent unknowns, which are more realistic for real applications, will be studied. 
\section*{Funding} 
Research reported in this publication was supported by the King Abdullah University of Science and Technology (KAUST).
\section*{Appendices}
\appendix
\section{Proof of Proposition (\ref{IP12})}\label{app_IP12}
\subsection{IP1}
First,  at fixed time $t^*$,  the  PDE in (\ref{wave equation}) is multiplied by $\phi_m(x)$ and integrated over $[0,L]$:
\begin{equation}\label{step1_fxt}
 \int_0^L \phi_m(x) u_{tt}(x,t^*) \mathrm{d}x - c \int_0^L \phi_m(x) u_{xx}(x,t^*) \mathrm{d}x = \int_0^L \phi_m(x) f(x,t^*) \mathrm{d}x.
\end{equation}
Then we apply integration by parts twice to the second left-hand side integral in (\ref{step1_fxt}):
\begin{equation}\label{step2_fxt}
 \int_0^L \phi_m(x) u_{tt}(x,t^*) \mathrm{d}x - c \int_0^L \phi^{\prime\prime}_m(x) u(x,t^*) \mathrm{d}x = \int_0^L \phi_m(x) f(x,t^*) \mathrm{d}x.
\end{equation}
Finally, by writing $f(x,t^*)$ in its basis expansion, system (\ref{system_ct_compound}) is obtained with components as in (\ref{system_ct_compound_compoC}).IP1 and (\ref{system_ct_compound_compoK}).IP1.
\subsection{IP2} \label{app_IP2}
Equation (\ref{wave equation}) is first multiplied  by $\phi_m(x)$, where $c=c(x)$, and integrated over $\Omega$, we obtain:
\begin{equation}
\int_0^L \phi_m(x) u_{tt}(x,t^*) \mathrm{d}x -  \int_0^L \phi_m(x) c(x) u_{xx}(x,t^*) \mathrm{d}x = \int_0^L \phi_m(x) f(x) \mathrm{d}x.
\end{equation}
After that, we integrate the second term on the left-hand side by parts twice, and so
\begin{eqnarray*}
  \int_0^L u(x,t^*) \left[ c''(x) \phi_m (x)+2 c' (x)  \phi'_m(x)+  c (x) \phi_m'' (x)\right] \ \mathrm{d}x =\\
  \int_0^L \phi_m(x) u_{tt}(x,t^*) \mathrm{d}x -  \int_0^L \phi_m(x) f(x) \mathrm{d}x.
\end{eqnarray*}
Then by writting $c(x)$ in its basis expansion, system (\ref{system_ct_compound}) is obtained with components as in (\ref{system_ct_compound_compoC}).IP2 and (\ref{system_ct_compound_compoK}).IP2.
\section{Proof of Proposition (\ref{joint})}\label{app_joint}

System (\ref{system_joint}) can be obtained by doing the same steps in \ref{app_IP2}:  
Equation (\ref{wave equation}) is first multiplied  by $\phi_m(x)$; integrated over $\Omega$; then integration by parts is applied to the second integral, and finally,  the unknowns are written in their basis expansion, $f_I(x)~=~\sum_{i=0}^I \gamma_i \xi_i$ and $c_J(x)~=~\sum_{j=0}^J \beta_j \upsilon_j$.
\section{Proof of Proposition (\ref{IP_kaw})}\label{app_ IP_kaw}
\noindent {\bf STEP 1:} Fix the time in equation (\ref{kaw}) at $t^*$, and then  multiply the equation by the modulating functions $\phi_m(x)$:
\begin{equation}
\begin{split}
u_t(x,t^*)\phi_m(x) +\alpha_1 u(x,t^*) u_x(x,t^*)\phi_m(x) \\
+ \alpha_2 u_{xxx}(x,t^*) \phi_m(x)- \alpha_3 u_{xxxxx}(x,t^*) \phi_m(x)= 0.
\end{split}
\end{equation}
{\bf STEP 2:} 
Integrate over the space interval:
\begin{equation}\label{step2_kwa}
\begin{split}
\int_0^L u_t(x,t^*)\phi_m(x)\, \mathrm{d}x +\alpha_1\int_0^L  u(x,t^*) u_x(x,t^*)\phi_m(x) \, \mathrm{d}x\\
+ \alpha_2 \int_0^L u_{xxx}(x,t^*) \phi_m(x)\, \mathrm{d}x- \alpha_3 \int_0^L u_{xxxxx}(x,t^*) \phi_m(x)\, \mathrm{d}x= 0.
\end{split}
\end{equation}
{\bf STEP 3:}   By applying the integration by parts formula: once to the second integral, three times  to the third integral, and five times to the fourth integral in (\ref{step2_kwa}), one can obtain:
\begin{equation}\label{step3_kwa}
\begin{split}
\int_0^L u_t(x,t^*)\phi_m(x)\, \mathrm{d}x - \frac{1}{2} \int_0^L \alpha_1 u^2(x,t^*) \phi^\prime_m(x) \, \mathrm{d}x\\
-\int_0^L \alpha_2 u(x,t^*) \phi^{\prime\prime\prime}_m(x)\, \mathrm{d}x+ \int_0^L\alpha_3 u(x,t^*) \phi^{\prime\prime\prime\prime\prime}_m(x)\, \mathrm{d}x= 0.
\end{split}
\end{equation}
The first integral in (\ref{step3_kwa}) represents the $m^{th}$ row of $K$ as in (\ref{K_kaw}) while the second, third, and fourth integrals form the $m^{th}$ row of $\mathcal{A}$ multiplied by the vector of unknowns $\Gamma$, see (\ref{A_kaw}) and (\ref{Gamma_kaw}).


\end{document}